\documentclass[11pt]{article}
\usepackage[papersize={8.5in,11in}, margin=1.2in]{geometry}
\usepackage{amsmath,amssymb,amsthm,mathtools}

\usepackage{microtype}
\usepackage{subfigure}
\usepackage{booktabs} 
\usepackage{dsfont} 
\usepackage{authblk} 
\usepackage{graphicx} 
\usepackage{algorithmic,algorithm} 
\usepackage[hidelinks]{hyperref}
\usepackage{blindtext}

\usepackage{doi}
\usepackage[numbers,sort]{natbib}
\renewcommand{\bibname}{References}
\renewcommand{\bibsection}{\section*{\bibname}}
\setcitestyle{authoryear,open={(},close={)}} 
\setcitestyle{square,numbers} 
\usepackage[capitalize,noabbrev]{cleveref}

\usepackage{enumitem}
\usepackage{url}

\theoremstyle{plain}
\newtheorem{theorem}{Theorem}[section]
\newtheorem{lemma}[theorem]{Lemma}
\newtheorem{proposition}[theorem]{Proposition}
\newtheorem{corollary}[theorem]{Corollary}

\theoremstyle{definition}

\newtheorem{assumption}{Assumption}
\newtheorem{example}[theorem]{Example}
\theoremstyle{remark}
\newtheorem{remark}[theorem]{Remark}

\newcommand{\R}{\mathbb{R}}
\newcommand{\G}{\mathcal{G}}
\newcommand{\K}{\mathcal{K}}
\newcommand{\norm}[1]{\|#1\|}

\title{Obtaining Game-Stationary Points for Smooth Nonconvex–Nonconcave Minimax Problems via First-Order Methods}
\author[1]{Mengxuan Dong}
\author[2]{Yao Yao}
\author[1]{Jiawei Zhang}
\affil[1]{Department of Computer Sciences, University of Wisconsin–Madison}
\affil[2]{School of Mathematics, University of Minnesota--Twin Cities}
\date{}

\begin{document}

\maketitle

\begin{abstract}
Minimax optimization is a fundamental framework in machine learning, robust optimization, and game theory, yet finding first-order stationary points of general nonconvex--nonconcave minimax problems remains challenging without additional structural assumptions. Existing guarantees often rely on global PL- or KL-type conditions that connect max-player stationarity to global inner optimality, or on Minty-type conditions that impose a global relation on the game gradient field relative to a reference solution; local KL variants relax the former requirement but typically require initialization and tracking within a near-optimal region. Such conditions may be difficult to satisfy in many applications. In contrast, we develop a first-order method that finds an \(\epsilon\)-stationary point within \(\widetilde{\mathcal O}(\epsilon^{-2})\) first-order iterations under a local inverse-Lipschitz regularity condition around approximate max-player stationary points, together with a compactness condition on a penalty sublevel set. Our condition places no optimality requirement on stationary points of the inner maximization problem: they need not be globally, or even locally, maximizing. We further provide sufficient conditions for the required regularity. In the unconstrained setting, it follows from uniform nonsingularity of the maximization-variable Hessian near stationary points; for constrained upper Moreau envelopes, it follows from standard KKT regularity conditions. These results establish first-order complexity guarantees for classes of nonconvex--nonconcave minimax problems not covered by the above PL-, KL-, or Minty-type frameworks.

\end{abstract}

\section{Introduction}
\label{sec:introduction}
Minimax optimization is a basic modeling framework for machine learning and engineering, including
adversarial training, robust optimization, generative modeling, multi-agent systems,
fair representation learning, reinforcement learning and robust
control
\citep{goodfellow2014gan,madry2018adversarial,sinha2018distributional,
madras2018fair,dai2018sbeed,nachum2019dualdice,basar2008hinfinity, ozdaglar2023revisiting}.

A minimax problem takes the following form:
\begin{equation}
\min_{x\in\mathbb{R}^d}\max_{y\in\mathbb{R}^m} F(x,y).
    \label{eq:intro-minimax}
\end{equation}
Convex--concave instances of \eqref{eq:intro-minimax} admit a mature theory
based on monotone variational inequalities
\citep{nemirovski2004prox,mokhtari2020unified,hamedani2021linesearch,
zhang2024robust,zheng2026lastiterate}.
When only the minimizing component is nonconvex, existing approaches include gradient descent–ascent methods, two-timescale algorithms, proximal-point-type methods, and smoothing techniques \citep{lin2020gda,rafique2022weakly,nouiehed2019solving,kong2021accelerated,zhang2020smoothedgda,yang2022faster}.
However, these methods do not readily extend to the \emph{nonconvex–nonconcave} (NC-NC) regime, where the absence of convexity and concavity presents substantial additional challenges. 
Indeed, several existing works \citep{jin2020localminimax,grimmer2023landscape,daskalakis2021complexity, diakonikolas2021efficient} have highlighted the difficulties inherent in this setting. For instance, gradient descent–ascent methods may exhibit cyclic behavior or diverge, the choice of an appropriate local solution concept requires careful consideration and solving the general nonconvex–nonconcave problem is computationally challenging.

Motivated by these challenges, we study the nonconvex-nonconcave setting and our goal is to find a 
\emph{first-order game stationary point}, namely, a
pair $(x,y)$ satisfying
\begin{equation}
    \nabla_x F(x,y)=0,
    \qquad
    \nabla_y F(x,y)=0.
    \label{eq:intro-game-stationarity}
\end{equation}
Accordingly, we consider an $\epsilon$-approximate version of \eqref{eq:intro-game-stationarity} defined by
$\|\nabla_xF(x,y)\|+\|\nabla_yF(x,y)\|\leq\epsilon$. Finding an $\epsilon$-stationary point under nonconvex-nonconcave setting remains a standard computational target \citep{zheng2023universal, li2025primaldual, tsaknakis2021fne}. Although \eqref{eq:intro-game-stationarity} is equivalent to $\nabla F = 0$, applying gradient descent to both variables can easily fail. Standard stationarity guarantees rely on $F$ being bounded below, whereas descending $y$ in minimax problem can drive $F$ to $-\infty$. 

\subsection{Literature review}


Many convergence guarantees for nonconvex--nonconcave problems relate the max-player stationarity residual to the global value $F^\star(x):=\sup_y F(x,y)$. A broad class of results assumes that, on a
feasible set $\mathcal{Y}$,
\(
\operatorname{dist}\bigl(0,-\nabla_yF(x,y)+N_{\mathcal{Y}}(y)\bigr)
\geq c\bigl(F^\star(x)-F(x,y)\bigr)^\theta,
\)
for some $c>0$ and $\theta>0$. This condition includes the unconstrained one-sided Polyak--\L{}ojasiewicz (PL) condition with 
$\theta=\tfrac{1}{2}$ \citep{nouiehed2019solving,yang2022faster}, as well as
the global Kurdyka--\L{}ojasiewicz (KL) condition with more general values of
$\theta$ \citep{zheng2023universal,li2025primaldual}. In particular, these
assumptions imply that every max-player stationary point in their respective
regions of validity is globally optimal.

Recent work relaxes this requirement by restricting the region on which the
inequality is assumed to hold. Specifically,
\citet{luwang2026constrainedminimax} impose a local KL condition on
$x$-dependent near-optimal level sets, thereby allowing suboptimal stationary points outside these regions. Their guarantee, however, requires an initial $y^0$ that is already near-optimal for the inner problem at $x^0$ and uses successive inner solutions to initialize subsequent subproblems. Obtaining the first such solution requires either an initialization within the local KL region or additional favorable structure. Consequently, their guarantee does not cover arbitrary joint initialization.

A different line of work assumes a Minty variational inequality (MVI). In the unconstrained setting, it requires the existence of a point $z^\star$ such that
$\langle G(z),z-z^\star\rangle\geq0$ for all $z$, where
$G=(\nabla_xF,-\nabla_yF)$. This condition implies that $z^\star$ is a global saddle point and, consequently, a game-stationary point, restricting its applicability to general nonconvex--nonconcave objectives, which limits its applicability to broad classes of nonconvex--nonconcave objectives \citep{zheng2023universal}. The weak MVI relaxes this requirement, for example, to $\langle G(z),z-z^\star\rangle\geq-\eta\|G(z)\|^2$ for a game stationary reference point $z^\star$. Although subsequent work has expanded the admissible range of $\eta>0$, the assumption still imposes a global condition on the gradient field relative to a single solution \citep{diakonikolas2021efficient,pethick2022escaping,fan2024weaker,alacaoglu2024revisiting}.

Smooth merit functions provide another approach by reformulating game stationarity as a minimization problem \citep{raghunathan2019gni,tsaknakis2021fne}. Their global minimizers characterize the stationary points of the original game. However, these merit functions are generally nonconvex, so first-order methods typically provide guarantees only for approximate stationarity of the merit objective. Without additional structure, such guarantees do not certify game stationarity of the original problem. The challenge is therefore to construct a tractable reformulation whose approximate stationary points yield the desired certificate.

These limitations of existing approaches motivate the following question:
\begin{center}
\emph{Can we achieve global convergence to game stationarity without requiring max-player stationarity to imply global optimality or imposing global conditions relative to a reference solution?}
\end{center}

We answer this question affirmatively. We develop a penalty method that enforces $y$ to be nearly stationary and prove that approximate stationarity of the penalty function can be transferred to the approximate game stationarity of the original problem under suitable regularity and growth assumptions. The penalty objective is smooth and admits an efficiently computable gradient. Building on these properties, our method computes an $\epsilon$-approximate game stationary point using $\widetilde{O}(\epsilon^{-2})$ total gradient evaluations.

\subsection{Our contributions}

\paragraph{Summary of contributions.}
We address the preceding question through a smooth minimization
reformulation. First, under local regularity and growth assumptions,
we construct a  function whose approximate stationary points
are approximate game stationary points of the original objective,
using a fixed finite penalty parameter. Second, we develop an inexact
gradient descent method with $\widetilde{O}(\varepsilon^{-2})$
first-order oracle complexity. Third, we provide sufficient conditions
for the required regularity, covering nonsingular Hessians at stationary points for max problem
and a class of constrained problems satisfying strict
complementarity and reduced-Hessian nondegeneracy.

\paragraph{Our formulation.}
For an $L$-smooth objective $F$ and fix $p>L$, define the upper proximal point of $F(x,\cdot)$ at $y$ and the associated stationarity gap by
\begin{align}
    w(x,y)
    &:=\arg\max_{v\in\mathbb{R}^m}
    \left\{F(x,v)-\frac{p}{2}\|v-y\|^2\right\},
    \label{eq:w-def}\\
    \mathcal{G}(x,y)
    &:=F(x,w(x,y))
      -\frac{p}{2}\|w(x,y)-y\|^2-F(x,y).
    \label{eq:G-def}
\end{align}
The inner problem in \eqref{eq:w-def} is strongly concave, and its gap $\mathcal{G}$ is nonnegative and vanishes exactly when
$\nabla_yF(x,y)=0$ (Lemma~\ref{lem:gap-properties}). We therefore minimize
\begin{equation}
    P_\rho(x,y):=F(x,y)+\rho\mathcal{G}(x,y).
    \label{eq:Prho-def}
\end{equation}
This penalty function has a Lipschitz gradient that can be evaluated
using first-order information and a strongly concave response solve.

\paragraph{Stationary exactness and complexity.}
Our key result transfers approximate stationarity of $P_\rho$
to stationarity of $F$. Under the stated local regularity,
growth, and compact-sublevel assumptions, a sufficiently large finite $\rho$ ensures
\begin{equation}
    \|\nabla P_\rho(x,y)\|\leq\varepsilon
    \quad\Longrightarrow\quad
    \|\nabla_xF(x,y)\|+\|\nabla_yF(x,y)\|
    \leq C_\rho\varepsilon,
    \label{eq:intro-transfer}
\end{equation}
on the relevant sublevel set whenever the merit gradient is
sufficiently small. Here $C_\rho$ is independent of the target
accuracy. Thus, stationary points of $P_\rho$ are game stationary
points of $F$ (Corollary~\ref{cor:stationary-exactness}).
This transfer yields an inexact gradient descent method requiring
$\widetilde{O}(\varepsilon^{-2})$ gradient evaluations of $F$
to find a $\varepsilon$-game stationary point
(Theorem~\ref{thm:inexact-gd-complexity}). This is the canonical rate of
gradient descent for finding an $\varepsilon$-stationary point of a smooth
nonconvex function, now obtained for game stationarity of a
nonconvex--nonconcave minimax problem.

\paragraph{Local regularity beyond global gradient domination.}
Our local regularity condition compares the proximal displacement
$w(x,y)-y$ with the corresponding change in $\nabla_yF$. This condition is imposed locally near max-player stationary points and holds uniformly over the relevant compact sublevel set
(Assumption~\ref{ass:inverseLip}).
For $C^2$ objectives, Proposition~\ref{prop:hessian-nondegeneracy} shows
that the condition is satisfied whenever $\nabla_{yy}^2F$ is nonsingular at all max-player stationary points on this compact set. In particular, it allows for local minima, maxima and saddle points on the max-player side. (Proposition~\ref{prop:hessian-nondegeneracy}). When strongly convex constraints are introduced, we further show that the corresponding
upper Moreau envelope satisfies this condition under strict complementarity
and nondegeneracy of the reduced Lagrangian Hessian (Proposition~\ref{prop:constrained-envelope-inverseLip}).
\section{Setup and the regularized gap penalty}
\label{sec:regularized-gap-penalty}
Throughout, vector norms denote Euclidean norms, and matrix norms denote operator norms. For the minimax problem \eqref{eq:intro-minimax}, we first make the following assumption.

\begin{assumption}[Smoothness]
\label{ass:smoothness}
The function $F: \mathbb{R}^d \times \mathbb{R}^m \to \mathbb{R}$ is continuously
differentiable, and its gradient is $L$-Lipschitz continuous; that is, for any
$x_1,x_2\in\mathbb{R}^d$ and $y_1,y_2\in\mathbb{R}^m$,
\begin{equation}
    \|\nabla F(x_1, y_1) - \nabla F(x_2, y_2)\| 
    \le L (\|x_1 - x_2\| + \|y_1 - y_2\|).
    \label{eq:smoothness}
\end{equation}
\end{assumption}

Recall that fix $p>L$, the proximal point $w(x,y)$, the stationarity gap $\mathcal{G}(x,y)$, and the penalty function $P_\rho(x,y)$ for $\rho > 0$, are defined in~\eqref{eq:w-def}, \eqref{eq:G-def}, and~\eqref{eq:Prho-def}, respectively. Then we have the following lemma.

\begin{lemma}
\label{lem:gap-properties}
Under Assumption~\ref{ass:smoothness}, the following statements hold.
\begin{enumerate}[label=(\roman*)]
    \item The maximization problem in \eqref{eq:w-def} is
    $(p-L)$-strongly concave and has a unique maximizer $w(x,y)$. Moreover, 
    \begin{equation}
        \nabla_yF(x,w(x,y))=p(w(x,y)-y).
        \label{eq:proximal-foc}
    \end{equation}

    \item The map $w$ is globally Lipschitz with constant $\kappa_w := \frac{\sqrt{L^2 + p^2}}{p-L}$. More precisely, for $z_i = (x_i,y_i)$ and
    $w_i=w(x_i,y_i)$ with $i=1,2$,
    \begin{equation}
        \norm{w_1-w_2}
        \leq
        \frac{L\norm{x_1-x_2}+p\norm{y_1-y_2}}{p-L}
        \leq \frac{\sqrt{L^2 + p^2}}{p-L}\|z_1-z_2\|.
        \label{eq:w-lipschitz}
    \end{equation}

    \item With $w=w(x,y)$, we have
    \begin{equation}
        \G(x,y)
        \geq
        \frac{p-L}{2}\norm{w-y}^2.
        \label{eq:gap-displacement-lower}
    \end{equation}
    In addition,
    \begin{equation}
        \frac{1}{2(p+L)}\norm{\nabla_yF(x,y)}^2
        \leq
        \G(x,y)
        \leq
        \frac{1}{2(p-L)}\norm{\nabla_yF(x,y)}^2.
        \label{eq:gap-gradient-bounds}
    \end{equation}
    Consequently,
    \begin{equation}
        \G(x,y)=0
        \quad\Longleftrightarrow\quad
        \nabla_yF(x,y)=0.
        \label{eq:gap-zero-equivalence}
    \end{equation}

    \item The gap $\G(x,y)$ is continuously differentiable and
    \begin{align}
        \nabla_x\G(x,y)
        &=
        \nabla_xF(x,w)-\nabla_xF(x,y),
        \label{eq:gap-gradient-x}\\
        \nabla_y\G(x,y)
        &=\nabla_yF(x,w)-\nabla_yF(x,y)=p(w-y)-\nabla_yF(x,y).
        \label{eq:gap-gradient-y}
    \end{align}
    Consequently,
    \begin{align}
        \nabla_xP_\rho(x,y)
        &=
        \nabla_xF(x,y)
        +\rho\bigl[\nabla_xF(x,w)-\nabla_xF(x,y)\bigr],
        \label{eq:penalty-gradient-x}\\
        \nabla_yP_\rho(x,y)
        &=
        \nabla_yF(x,y)
        +\rho\bigl[\nabla_yF(x,w)-\nabla_yF(x,y)\bigr].
        \label{eq:penalty-gradient-y}
    \end{align}

    \item The function $P_\rho$ has a globally Lipschitz gradient with constant 
    \begin{equation}
        L_\rho = L + \rho L (1 + \sqrt{1+\kappa_w^2}).
        \label{eq:penalty-smoothness}
    \end{equation}
\end{enumerate}
\end{lemma}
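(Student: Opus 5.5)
The plan is to treat the five items in order; each one rests on strong concavity of the proximal subproblem and the Danskin/envelope theorem.

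\emph{Item (i).} By Assumption~\ref{ass:smoothness}, $F(x,\cdot)$ is $L$-weakly concave, that is, $F(x,v)+\frac{L}{2}\|v\|^2$ is convex. Hence $v\mapsto F(x,v)-\frac p2\|v-y\|^2$ has a $(p-L)$-strongly monotone negative gradient. Existence and uniqueness of the maximizer follow, and the first-order condition gives \eqref{eq:proximal-foc}.

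\emph{Item (ii).} Write the optimality conditions $\nabla_yF(x_i,w_i)-p(w_i-y_i)=0$ and subtract them. Add and subtract $\nabla_yF(x_2,w_1)$, then take the inner product with $w_1-w_2$. Strong monotonicity in the $v$-argument gives a lower bound $(p-L)\|w_1-w_2\|^2$. The cross terms are bounded by $\big(L\|x_1-x_2\|+p\|y_1-y_2\|\big)\|w_1-w_2\|$, using the Lipschitz bound on $\nabla_yF$ in $x$. The second inequality in \eqref{eq:w-lipschitz} is Cauchy--Schwarz.

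\emph{Item (iii).} Since $w$ maximizes $\phi(v)=F(x,v)-\frac p2\|v-y\|^2$, which is $(p-L)$-strongly concave, we have $\phi(w)-\phi(y)\ge\frac{p-L}2\|w-y\|^2$, and $\phi(y)=F(x,y)$; this is \eqref{eq:gap-displacement-lower}. For the upper bound in \eqref{eq:gap-gradient-bounds}, I will apply the standard strong-concavity bound $\max\phi-\phi(y)\le\frac{1}{2(p-L)}\|\nabla\phi(y)\|^2$, noting that $\nabla\phi(y)=\nabla_yF(x,y)$. For the lower bound, note that $\phi$ is also $(p+L)$-smooth. Taking the candidate $v=y+\frac{1}{p+L}\nabla_yF(x,y)$ gives $\phi(v)\ge\phi(y)+\frac{1}{2(p+L)}\|\nabla_yF(x,y)\|^2$, and $\G\ge\phi(v)-\phi(y)$. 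The equivalence \eqref{eq:gap-zero-equivalence} is then immediate.

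\emph{Item (iv).} I apply Danskin's theorem to $(x,y)\mapsto\max_v\{F(x,v)-\frac p2\|v-y\|^2\}$. The maximizer is unique and $w$ is continuous by (ii), so this envelope function is $C^1$ with gradient $\big(\nabla_xF(x,w),\,p(w-y)\big)$. Subtracting $\nabla F(x,y)$ gives \eqref{eq:gap-gradient-x}--\eqref{eq:gap-gradient-y}, and \eqref{eq:proximal-foc} gives the second form of \eqref{eq:gap-gradient-y}. Formulas \eqref{eq:penalty-gradient-x}--\eqref{eq:penalty-gradient-y} follow by linearity.

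\emph{Item (v).} Write $\nabla P_\rho=(1-\rho)\nabla F(z)+\rho\,\nabla F(x,w(z))$. A naive bound on this expression gives $|1-\rho|L+\rho L\sqrt{1+\kappa_w^2}$, which is at most $L+\rho L+\rho L\sqrt{1+\kappa_w^2}$ since $|1-\rho|\le1+\rho$; this is exactly \eqref{eq:penalty-smoothness}. For the second term, the map $z\mapsto(x,w(z))$ is $\sqrt{1+\kappa_w^2}$-Lipschitz.

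\emph{Main obstacle.} The one subtle point is the Lipschitz constant of $\nabla F$ in \eqref{eq:smoothness}. It is stated with the sum $\|x_1-x_2\|+\|y_1-y_2\|$, not the Euclidean joint norm. The sum can be as large as $\sqrt2\,\|z_1-z_2\|$, so Lipschitzness in the joint Euclidean norm would only follow with constant $\sqrt2 L$. To avoid this I will avoid converting to the joint norm too early and carry $\|x_1-x_2\|+\|w_1-w_2\|$ directly, bounding it by $\|x_1-x_2\|+\kappa_w\|z_1-z_2\|\le\sqrt{1+\kappa_w^2}\,\|z_1-z_2\|$ by Cauchy--Schwarz, using $\|x_1-x_2\|\le\|z_1-z_2\|$. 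The same care is needed in (ii), where only the partial Lipschitz bounds $\|\nabla_yF(x_1,v)-\nabla_yF(x_2,v)\|\le L\|x_1-x_2\|$ and $\|\nabla_yF(x,v_1)-\nabla_yF(x,v_2)\|\le L\|v_1-v_2\|$ are used; both follow from \eqref{eq:smoothness}.
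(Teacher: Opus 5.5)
Items (i)--(iv) are correct and follow the paper's proof. The only variation is in (ii): you pair the subtracted optimality conditions with $w_1-w_2$ and use one-sided monotonicity, where the paper uses the triangle inequality. Both give $\norm{w_1-w_2}\le\frac{L\norm{x_1-x_2}+p\norm{y_1-y_2}}{p-L}$.

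The main line of your item (v) is also the paper's argument: you write $\nabla P_\rho=(1-\rho)\nabla F(z)+\rho\nabla F(\widehat z)$ with $\widehat z=(x,w(z))$. Your grouping even gives the slightly sharper $|1-\rho|L$ in place of $(1+\rho)L$. This argument is valid exactly when \eqref{eq:smoothness} is read as $\norm{\nabla F(z_1)-\nabla F(z_2)}\le L\norm{z_1-z_2}$ in the Euclidean joint norm. You are right that the literal right-hand side $L(\norm{x_1-x_2}+\norm{y_1-y_2})$ is a weaker hypothesis, and the paper's proof of (v) tacitly uses the Euclidean version.

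The gap is your proposed repair. The inequality $\norm{x_1-x_2}+\kappa_w\norm{z_1-z_2}\le\sqrt{1+\kappa_w^2}\,\norm{z_1-z_2}$ is false: for $y_1=y_2$ the left side equals $(1+\kappa_w)\norm{z_1-z_2}$. Cauchy--Schwarz delivers $a+\kappa_w b\le\sqrt{1+\kappa_w^2}\,\norm{z_1-z_2}$ when $a^2+b^2\le\norm{z_1-z_2}^2$, that is, when $a$ and $b$ measure complementary components. Your $b$ is all of $\norm{z_1-z_2}$. Moreover, your $(1-\rho)$-term still relies on the Euclidean bound you set out to avoid.

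In fact no repair can work for all $\rho>0$. Take $F(x,y)=\frac{L}{2\sqrt{2}}(x+y)^2$ on $\R\times\R$. The literal form of \eqref{eq:smoothness} holds with constant $L$. Yet $\nabla^2P_\rho$ has largest eigenvalue $\sqrt{2}\,L\bigl(1+\frac{\rho L}{\sqrt{2}\,p-L}\bigr)$, which exceeds $L_\rho$ from \eqref{eq:penalty-smoothness} for small $\rho$ (e.g.\ $L=1$, $p=2$, $\rho\le 0.15$).

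So drop the repair and state that \eqref{eq:smoothness} is understood in the Euclidean joint norm. Then $\norm{\widehat z_1-\widehat z_2}^2=\norm{x_1-x_2}^2+\norm{w_1-w_2}^2\le(1+\kappa_w^2)\norm{z_1-z_2}^2$, and your count goes through as in the paper.

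If you keep the literal assumption, the bound you can actually prove comes from $\norm{x_1-x_2}+\norm{w_1-w_2}\le\frac{p}{p-L}(\norm{x_1-x_2}+\norm{y_1-y_2})$, which follows from (ii). It gives $\norm{\nabla P_\rho(z_1)-\nabla P_\rho(z_2)}\le L\bigl(|1-\rho|+\frac{\rho p}{p-L}\bigr)(\norm{x_1-x_2}+\norm{y_1-y_2})\le\sqrt{2}\,L_\rho\norm{z_1-z_2}$. This changes only constants downstream.
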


The proof relies primarily on the \((p-L)\)-strong concavity of the problem in \eqref{eq:w-def} and Danskin’s theorem. The detailed proof is provided in Appendix~\ref{app:proof-gap-properties}.

\begin{remark}[Only the max-player curvature constrains $p$]
\label{rem:p-choice}

Let $L_y\leq L$ be a Lipschitz constant for $\nabla_yF(x,\cdot)$,
uniformly in $x$. The condition $p>L$ is used only to ensure the strong
concavity of the inner problem in \eqref{eq:w-def}. Thus, all results remain
valid under the weaker condition $p>L_y$, with every occurrence of $p-L$
replaced by $p-L_y$ in Lemma~\ref{lem:gap-properties}
and Section~\ref{sec:transfer-and-complexity}. 
\end{remark}

Consider $C_0 \ge \max_y F(x^0,y) + 1$ for some initial $x^0$. We make the following assumptions.
\begin{assumption}\label{ass:compact-sublevel}
There exists a constant $\rho_C\geq 1$ and a compact set $\K$ such that
\begin{equation}
    \{
        z:=(x,y)\in\R^{d+m}:P_{\rho}(z)\leq C_0
    \} \subseteq \K, \quad \forall \rho\geq\rho_C.
    \label{eq:common-compact-set}
\end{equation}
\end{assumption}

\begin{assumption}[Local inverse--Lipschitz regularity]
\label{ass:inverseLip}
There exist constants $\eta_s>0$, $\delta_s>0$, and $\kappa_s>0$
such that the following holds.  For every $(x,y)\in\K$, let
$w=w(x,y)$.  If
\begin{equation}
    \max\{
        \norm{\nabla_yF(x,y)},
        \norm{\nabla_yF(x,w)}
    \}
    \leq \eta_s,
    \qquad
    \norm{w-y}\leq\delta_s,
    \label{eq:inverseLip-region}
\end{equation}
then
\begin{equation}
    \norm{w-y}
    \leq
    \kappa_s
    \norm{\nabla_yF(x,w)-\nabla_yF(x,y)}.
    \label{eq:inverseLip-bound}
\end{equation}
\end{assumption}


We next present two settings under which Assumption~\ref{ass:inverseLip} is satisfied. The first setting concerns the nondegeneracy of the max-player Hessian and is formalized in the following proposition.
\begin{proposition}
    \label{prop:hessian-nondegeneracy}
Suppose $F$ is twice continuously differentiable on a neighborhood of $\K$, and there exists $\mu_0>0$ such that
\begin{equation}
    \sigma_{\min}\bigl(\nabla_{yy}^2F(x,y)\bigr)
    \geq \mu_0,
    \label{eq:uniform-nondegeneracy}
\end{equation}
for every $(x,y)\in\K$ satisfying $\nabla_yF(x,y)=0$.
Then Assumption~\ref{ass:inverseLip} holds.
\end{proposition}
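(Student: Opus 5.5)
The plan is to argue by compactness and continuity. Use the integral form of the mean value theorem along the segment from $y$ to $w$:
\begin{equation*}
    \nabla_yF(x,w)-\nabla_yF(x,y)=H(x,y)\,(w-y),
    \qquad
    H(x,y):=\int_0^1\nabla_{yy}^2F\bigl(x,y+t(w-y)\bigr)\,dt .
\end{equation*}
It then suffices to show that $\sigma_{\min}(H(x,y))\geq\mu_0/2$ whenever $(x,y)\in\K$ satisfies \eqref{eq:inverseLip-region} with suitably small $\eta_s,\delta_s$. In that case \eqref{eq:inverseLip-bound} holds with $\kappa_s=2/\mu_0$.

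\textbf{Step 1: extend nondegeneracy to a neighborhood.} Let $U$ be an open neighborhood of $\K$ on which $F$ is $C^2$. Choose $r>0$ so that the closed $r$-neighborhood $\K_r$ of $\K$ lies in $U$; this set is compact. Then $\nabla^2_{yy}F$ is uniformly continuous on $\K_r$. Let $S:=\{(x,y)\in\K:\nabla_yF(x,y)=0\}$, which is compact. Since $\sigma_{\min}$ is $1$-Lipschitz in operator norm, there is $\tau\in(0,r]$ with
\begin{equation*}
    \sigma_{\min}\bigl(\nabla^2_{yy}F(x',y')\bigr)\geq \tfrac{3}{4}\mu_0
    \quad\text{whenever}\quad \operatorname{dist}\bigl((x',y'),S\bigr)\leq\tau .
\end{equation*}
A further shrinking gives the stronger statement that the operator norm of $\nabla^2_{yy}F(x',y')-\nabla^2_{yy}F(x'',y'')$ is at most $\mu_0/4$ when both points lie within $\tau$ of a common point of $S$. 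If $S$ is empty, then by compactness $\|\nabla_yF\|$ is bounded below on $\K$, and choosing $\eta_s$ below that bound makes the assumption vacuous.

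\textbf{Step 2 (main obstacle): small $\|\nabla_yF\|$ forces proximity to $S$.} I claim there is $\eta>0$ such that every $(x,y)\in\K$ with $\|\nabla_yF(x,y)\|\leq\eta$ satisfies $\operatorname{dist}((x,y),S)\leq\tau/2$. Suppose not. Then there is a sequence in $\K$ with $\nabla_yF\to0$ and distance to $S$ greater than $\tau/2$. By compactness a subsequence converges to a point of $\K$, which by continuity lies in $S$. This contradicts the distance bound. This step uses essentially that stationarity is only approximate, so compactness of $\K$ is indispensable here.

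\textbf{Step 3: conclude.} Set $\eta_s:=\eta$ and $\delta_s:=\tau/2$. Take $(x,y)\in\K$ satisfying \eqref{eq:inverseLip-region}, and let $s\in S$ be a point with $\|(x,y)-s\|\leq\tau/2$. Every point $(x,y+t(w-y))$ on the segment is within $\tau/2+\delta_s=\tau$ of $s$, so it lies in $\K_r$. By Step 1, each such Hessian is within $\mu_0/4$ of $\nabla^2_{yy}F(s)$, and therefore so is their average $H$. Hence
\begin{equation*}
    \sigma_{\min}(H)\geq\sigma_{\min}\bigl(\nabla^2_{yy}F(s)\bigr)-\tfrac{\mu_0}{4}\geq\tfrac{3\mu_0}{4}.
\end{equation*}
Note that averaging individually nonsingular indefinite matrices can produce a singular matrix. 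Comparing all of them to the single matrix $\nabla^2_{yy}F(s)$ avoids this. It follows that $\|w-y\|\leq\|H^{-1}\|\,\|H(w-y)\|\leq\frac{4}{3\mu_0}\|\nabla_yF(x,w)-\nabla_yF(x,y)\|$, which gives \eqref{eq:inverseLip-bound} with $\kappa_s=4/(3\mu_0)$. The condition on $\|\nabla_yF(x,w)\|$ in \eqref{eq:inverseLip-region} is not needed.
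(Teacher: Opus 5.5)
Your proof is correct and follows essentially the paper's route. Both arguments combine a compactness-by-contradiction step, uniform continuity of $\nabla^2_{yy}F$ on a closed enlargement of $\K$, and the integral mean-value identity $\nabla_yF(x,w)-\nabla_yF(x,y)=H(w-y)$ with a singular-value perturbation bound. The differences are minor: you compare the segment Hessians with the Hessian at a nearby exact critical point $s\in S$ and control $\|w-y\|$ through $\delta_s$, which gives $\kappa_s=4/(3\mu_0)$. The paper instead shows $\sigma_{\min}(\nabla^2_{yy}F)\geq 3\mu_0/4$ at all near-critical points of $\K$, compares with $\nabla^2_{yy}F(x,y)$ itself, and bounds $\|w-y\|=\|\nabla_yF(x,w)\|/p$ via $\eta_s\leq pr_0$, which gives $\kappa_s=2/\mu_0$.
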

Note that \eqref{eq:uniform-nondegeneracy} imposes no sign condition on
$\nabla_{yy}^2F$ and it excludes only zero eigenvalues at max-player critical
points in $\K$. Detailed proof is provided in Appendix~\ref{app:proof-hessian}.

Although problem \eqref{eq:intro-minimax} is unconstrained, practical applications often restrict the max-player to a feasible set. A representative example is budget-constrained adversarial training
\citep{madry2018adversarial}, where inner 
perturbations $u$ must  satisfy $c(u)\leq0$. By leveraging the upper Moreau envelope technique, such problems fit seamlessly into our framework. Moreover, when a perturbation exhausts its budget while the loss continues to rise along the outward normal direction, the constraint prevents further ascent \citep{kanai2023relationship} -- yielding a strictly positive KKT multiplier at an active max-player stationary point. Motivated by this intuition, we give our second sufficient setting for Assumption~\ref{ass:inverseLip}.

\begin{proposition}
\label{prop:constrained-envelope-inverseLip}
Let $\lambda>0$, and suppose that $F$ admits the representation
\begin{equation}
    F(x,y)
    =
    \max_{u:\,c(u)\leq0}
    \left\{
        g(x,u)-\frac{\lambda}{2}\norm{u-y}^2
    \right\}.
    \label{eq:inverseLip-envelope}
\end{equation}
Assume the following conditions.
\begin{enumerate}[label=(\roman*)]
    \item The functions $g$ and $c$ are twice continuously
    differentiable, $\nabla_u g(x,\cdot)$ is $L_g$-Lipschitz uniformly
    in $x$, and $\lambda>L_g$.

    \item The function $c$ is $\mu_c$-strongly convex for some
    $\mu_c>0$ and satisfies the Slater condition: there exists
    $u^{\rm s}$ such that $c(u^{\rm s})<0$.

    \item Define
    \begin{equation*}
        \K^+
        :=
        \left\{
            (x,y):
            \inf_{(\bar x,\bar y)\in\K}
            \norm{(x,y)-(\bar x,\bar y)}
            \leq1
        \right\}.
    \end{equation*}
    There exists $\eta_M>0$ such that strict complementarity holds for
    every KKT point associated with $(x,y)\in\K^+$ satisfying
    $\norm{\nabla_yF(x,y)}\leq\eta_M$.  Namely, if $u=u(x,y)$ is the
    unique maximizer in \eqref{eq:inverseLip-envelope} and
    $\alpha=\alpha(x,y)$ is its multiplier, then
    \begin{equation}
        c(u)=0
        \quad\Longrightarrow\quad
        \alpha>0.
        \label{eq:inverseLip-strict-complementarity}
    \end{equation}

    \item At every KKT point described in part~(iii), define
    \begin{equation*}
        H(x,u,\alpha)
        :=
        -\nabla_{uu}^2g(x,u)+\alpha\nabla^2c(u).
    \end{equation*}
    If $c(u)<0$, then $H(x,u,0)$ is nonsingular.  If $c(u)=0$, let
    $Z(u)$ have orthonormal columns spanning
    $
        \{d:\nabla c(u)^\top d=0\}
    $.
    Then the reduced Lagrangian Hessian
    $
        Z(u)^\top H(x,u,\alpha)Z(u)
    $
    is nonsingular.
\end{enumerate}
Then Assumption~\ref{ass:inverseLip} holds.
\end{proposition}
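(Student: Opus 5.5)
We reduce the statement to the mechanism behind Proposition~\ref{prop:hessian-nondegeneracy}, applied to the envelope $F$ in \eqref{eq:inverseLip-envelope}: we show that $\nabla_yF$ is locally a $C^1$ map of $(x,y)$ whose Jacobian in $y$ is uniformly nonsingular near max-player stationary points. Here $F$ need not be $C^2$ a priori, so that proposition cannot be invoked verbatim.

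\textbf{Step 1: structure of $F$.} Since $\lambda>L_g$ and $c$ is convex with a Slater point, the inner problem is strongly concave with a unique maximizer $u(x,y)$. Its KKT multiplier $\alpha(x,y)$ is unique, and both are continuous. Danskin's theorem gives
\[
\nabla_yF(x,y)=\lambda\bigl(u(x,y)-y\bigr).
\]
This makes $\nabla_yF$ Lipschitz in $y$, and $F$ is consistent with Assumption~\ref{ass:smoothness}.

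\textbf{Step 2: local smoothness of $u$ via the implicit function theorem.} Fix $(x,y)\in\K^+$ with $\norm{\nabla_yF(x,y)}\le\eta_M$. There are two cases.
\begin{itemize}
\item If $c(u)<0$, then locally $u$ solves $\nabla_ug(x,u)=\lambda(u-y)$. Its Jacobian in $u$ is $\lambda I-\nabla^2_{uu}g$, which is nonsingular by strong concavity.
\item If $c(u)=0$, then strict complementarity (iii) keeps the constraint active nearby. So $(u,\alpha)$ solves
\[
\nabla_ug-\lambda(u-y)-\alpha\nabla c=0,\qquad c(u)=0,
\]
whose bordered Jacobian is nonsingular because $\lambda I-\nabla^2_{uu}g+\alpha\nabla^2c\succ0$ and $\nabla c(u)\neq0$ by Slater.
\end{itemize}
In either case $u$, and hence $\nabla_yF$, is $C^1$ near $(x,y)$, and
\[
\nabla^2_{yy}F=\lambda(D_yu-I).
\]

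\textbf{Step 3: nonsingularity of $D_yu-I$ at stationary points.} At $\nabla_yF=0$ we have $u=y$. In the inactive case, the implicit function theorem gives
\[
D_yu=\lambda(\lambda I-\nabla^2g)^{-1},\qquad D_yu-I=(\lambda I-\nabla^2g)^{-1}\nabla^2g .
\]
Because $u=y$ forces $\nabla_ug=0$ there, $H(x,u,0)=-\nabla^2g$ is nonsingular by (iv). In the active case, a Schur-complement computation shows that $I-D_yu$ acts as $\lambda(\lambda I+H)^{-1}$ restricted to the tangent space and as zero on the normal direction, i.e.\ $D_yu=I-\lambda Z(Z^\top(\lambda I+H)Z)^{-1}Z^\top$. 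The resulting $\nabla_{yy}^2F$ is then nonsingular iff $Z^\top HZ$ is nonsingular, once the normal component $\lambda(D_yu-I)\nabla c$ is checked. This Schur/linear-algebra verification is the main obstacle. I would first write $D_yu$ via the bordered inverse and test it on $Z$ and on $\nabla c$ separately, since the normal direction is exactly where the multiplier enters.

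\textbf{Step 4: uniformity and transfer to Assumption~\ref{ass:inverseLip}.} The set of $(x,y)\in\K$ with $\nabla_yF=0$ is compact, and $\sigma_{\min}(\nabla^2_{yy}F)$ is continuous and positive on it. Hence there exist $r>0$ and $\mu>0$ such that on an $r$-neighborhood, within $\K^+$, $\nabla_yF(x,\cdot)$ is $C^1$ with $\sigma_{\min}\ge\mu$.

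By continuity and compactness, there are $\eta_s,\delta_s$ such that condition \eqref{eq:inverseLip-region} forces $y$ and $w$ both into this neighborhood. The justification is that small $\norm{\nabla_yF}$ on $\K$ implies proximity to the stationary set; otherwise a compactness contradiction arises. Take $\delta_s<r$.

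The mean value inequality along the segment $[y,w]$ then gives
\[
\norm{\nabla_yF(x,w)-\nabla_yF(x,y)}\ge\tfrac{\mu}{2}\norm{w-y},
\]
after shrinking $r$ so that the averaged Jacobian stays nonsingular, using uniform continuity of $\nabla^2_{yy}F$ on a compact set. This yields \eqref{eq:inverseLip-bound} with $\kappa_s=2/\mu$.
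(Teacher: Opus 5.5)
Your route is sound and differs from the paper's. However, the one formula you display in Step~3 is wrong, and taken literally it breaks the argument: you have interchanged $D_yu$ and $I-D_yu$.

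Differentiating the active system in $y$ gives $-(\lambda I+H)D_yu+\lambda I-\nabla c\,D_y\alpha=0$ and $\nabla c^\top D_yu=0$. Writing $D_yu=ZV$ and multiplying by $Z^\top$ yields
\[
D_yu=\lambda Z S^{-1}Z^\top,\qquad S:=Z^\top(\lambda I+H)Z=\lambda I+Z^\top HZ\succ0,
\]
which is the tangent-space analogue of your inactive-case formula $\lambda(\lambda I+H)^{-1}$. It is $D_yu$, not $I-D_yu$, that vanishes on $\nabla c$. With your formula, $\lambda(D_yu-I)\nabla c=0$, so $\nabla^2_{yy}F$ would always be singular, while its tangential block $-\lambda^2S^{-1}$ would be invertible regardless of (iv). That is the opposite of what you claim.

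With the correct formula and $Q:=[Z,\ \nabla c/\norm{\nabla c}]$,
\[
Q^\top\nabla^2_{yy}F\,Q=\lambda\,Q^\top(D_yu-I)Q=\operatorname{diag}\bigl(-\lambda S^{-1}Z^\top HZ,\;-\lambda\bigr).
\]
So the normal direction contributes the eigenvalue $-\lambda\neq0$, and $\nabla^2_{yy}F$ is nonsingular iff $Z^\top HZ$ is. This is your intended conclusion, and it holds throughout the chart, not only where $u=y$. With this repair, Steps~2 and~4 go through as you describe.

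The paper argues differently. It applies the implicit function theorem to the \emph{unregularized} KKT system tilted by the residual, $-\nabla_ug(x,u)+r+\alpha\nabla c(u)=0$ (with $c(u)=0$ in the active case), and solves for $(u,\alpha)$ as a function of $(x,r)$. The Jacobian of that system is exactly the bordered matrix built from $H$, so (iv) is itself the IFT hypothesis. It is used at every point of the compact small-residual set $\mathcal D_{\eta_M}\subseteq\K^+$. Then $Y(x,r)=\hat u(x,r)-r/\lambda$ is a local Lipschitz inverse of $\nabla_yF(x,\cdot)$, and a finite subcover with a Lebesgue number makes the constant uniform.

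Thus the paper builds the inverse of $\nabla_yF(x,\cdot)$ directly and never forms $\nabla^2_{yy}F$. You build the forward map, whose IFT nondegeneracy is automatic from $\lambda I+H\succ0$, and then transfer (iv) into invertibility of its Jacobian. Your route gives a clean structural statement: near max-player stationary points, $F$ is $C^2$ in $y$ with nonsingular $\nabla^2_{yy}F$, so the mechanism of Proposition~\ref{prop:hessian-nondegeneracy} applies locally. It also only needs (iii)--(iv) at exact stationary points of $\K$. Its cost is the Schur-complement algebra, which is precisely where your slip occurred; the paper's tilt parametrization avoids that algebra entirely.
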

Note that this result does not require $g(x,\cdot)$ to be convex or concave. The proof is provided in Appendix \ref{app:proof-envelope}.


We next provide a concrete example satisfying Assumption~\ref{ass:inverseLip}.
\begin{example}
Consider a label-adversarial regression with
$g(x,u):=\frac12(x-u)^2+r(x)$,
$r(x):=10x^2+23(\cos x-1)$, and $|u|\leq5$. Its upper-Moreau reformulation is
\begin{equation}
    F(x,y):=
    \max_{u^2 - 25 \le 0}\left\{g(x,u)-(u-y)^2\right\},
    \label{eq:intro-example}
\end{equation}
for which $\max_yF(x,y)=\max_{u^2 - 25 \le 0}g(x,u)$. There is a stationary point $(0,0)$ with $F(0,0)=0$ while $\max_yF(0,y)=25/2$. Hence, any global PL or
gap-based KL inequality imposed at $(0,0)$ fails. Our condition
nonetheless holds, with $\kappa_s=1/2$. Details are shown in Appendix~\ref{ex:robust-regression}.
\end{example}
\section{Stationarity transfer and gradient-descent complexity}\label{sec:transfer-and-complexity}

\subsection{Approximate stationarity transfer}\label{subsec:transfer}

Define
\begin{equation*}
    \underline F_{\K}:=\min_{z\in\K}F(z),
    \qquad
    B:=C_0-\underline F_{\K},
    \qquad
    D:=\sqrt{\frac{2B}{p-L}},
    \qquad
    C_y:=(p+L)D.
\end{equation*}

Note that $B$ is finite and nonnegative. With constants $\eta_s,\delta_s,\kappa_s$ from Assumption~\ref{ass:inverseLip}, we set
\begin{equation}
    \bar\rho
    :=
    \max\left\{
        \rho_c,
        \frac{C_y^2}{\eta_s^2},
        \frac{p^2D^2}{\eta_s^2},
        \frac{D^2}{\delta_s^2},
        2(1+p\kappa_s)
    \right\}.
    \label{eq:inverseLip-threshold}
\end{equation}

We are now ready to establish the relationship between the stationary points of \(P_{\rho}\) and \(F\) in the following theorem and corollary.
\begin{theorem}
\label{thm:stationarity-transfer}
Suppose Assumptions~\ref{ass:smoothness}--\ref{ass:inverseLip} hold.  Let
$\rho\geq\bar\rho$, and suppose that $(x,y)$ satisfies
\begin{equation}
    P_\rho(x,y)\leq C_0,
    \qquad
    \norm{\nabla P_\rho(x,y)}\leq\varepsilon.
    \label{eq:inverseLip-low-level}
\end{equation}
Let $w=w(x,y)$ and let $\Delta_y= \nabla_y F(x,w) - \nabla_y F(x,y)$ be the gradient difference. Then
\begin{align}
    \norm{\Delta_y}
    \leq
    \frac{\varepsilon}{\rho-1-p\kappa_s},
    \qquad
    &\norm{w-y}
    \leq
    \frac{\kappa_s\varepsilon}{\rho-1-p\kappa_s},
    \label{eq:inverseLip-delta-displacement}\\
    \norm{\nabla_yF(x,y)}
    \leq
    \frac{(1+p\kappa_s)\varepsilon}
    {\rho-1-p\kappa_s},
    \qquad
    &\norm{\nabla_xF(x,y)}
    \leq
    \left(
        1+
        \frac{\rho L\kappa_s}{\rho-1-p\kappa_s}
    \right)\varepsilon.
    \label{eq:inverseLip-nabla}
\end{align}
Consequently,
\begin{equation}
    \norm{\nabla_xF(x,y)}+\norm{\nabla_yF(x,y)}
    \leq C_\rho\varepsilon,
    \qquad
    C_\rho
    :=
    1+
    \frac{\rho L\kappa_s+1+p\kappa_s}
    {\rho-1-p\kappa_s}.
    \label{eq:inverseLip-game}
\end{equation}
In particular, every stationary point of $P_\rho$ in the common low-level
region is a stationary point of $F$.
\end{theorem}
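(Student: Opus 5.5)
The plan is to first use the level bound to place $(x,y)$ inside the region where Assumption~\ref{ass:inverseLip} applies, and then read off all four bounds from the $y$-component of $\nabla P_\rho$.

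\emph{Step 1: entering the regularity region.} Since $\rho\geq\bar\rho\geq\rho_C$, Assumption~\ref{ass:compact-sublevel} gives $(x,y)\in\K$, so $F(x,y)\geq\underline F_\K$. Then
\[
\rho\G(x,y)=P_\rho(x,y)-F(x,y)\leq C_0-\underline F_\K=B.
\]
By \eqref{eq:gap-displacement-lower}, $\norm{w-y}^2\leq 2B/((p-L)\rho)$, i.e.\ $\norm{w-y}\leq D/\sqrt\rho\leq\delta_s$ by the definition of $\bar\rho$.

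For the gradients, \eqref{eq:proximal-foc} gives $\norm{\nabla_yF(x,w)}=p\norm{w-y}\leq pD/\sqrt\rho\leq\eta_s$. From \eqref{eq:gap-gradient-bounds}, $\norm{\nabla_yF(x,y)}^2\leq 2(p+L)\G\leq 2(p+L)B/\rho$. This is at most $(p+L)^2D^2/\rho=C_y^2/\rho\leq\eta_s^2$, since $2(p+L)B\leq (p+L)^2\cdot 2B/(p-L)$. Hence \eqref{eq:inverseLip-region} holds, and Assumption~\ref{ass:inverseLip} yields $\norm{w-y}\leq\kappa_s\norm{\Delta_y}$.

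\emph{Step 2: bounding $\Delta_y$.} This is the key step. I rewrite \eqref{eq:penalty-gradient-y} as $\nabla_yP_\rho=\nabla_yF(x,y)+\rho\Delta_y$. By \eqref{eq:proximal-foc},
\[
\nabla_yF(x,y)=\nabla_yF(x,w)-\Delta_y=p(w-y)-\Delta_y .
\]
Therefore $\nabla_yP_\rho=(\rho-1)\Delta_y+p(w-y)$, and so
\[
\varepsilon\geq\norm{\nabla_yP_\rho}\geq(\rho-1)\norm{\Delta_y}-p\kappa_s\norm{\Delta_y}.
\]
Since $\rho\geq 2(1+p\kappa_s)$, the coefficient $\rho-1-p\kappa_s$ is positive. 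This gives the bound on $\norm{\Delta_y}$, and multiplying by $\kappa_s$ gives the bound on $\norm{w-y}$ in \eqref{eq:inverseLip-delta-displacement}.

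\emph{Step 3: the remaining bounds.} For the $y$-gradient, $\norm{\nabla_yF(x,y)}\leq p\norm{w-y}+\norm{\Delta_y}\leq(1+p\kappa_s)\varepsilon/(\rho-1-p\kappa_s)$. For the $x$-gradient, \eqref{eq:penalty-gradient-x} gives
\[
\nabla_xF(x,y)=\nabla_xP_\rho-\rho\bigl[\nabla_xF(x,w)-\nabla_xF(x,y)\bigr].
\]
By $L$-smoothness the bracket is at most $L\norm{w-y}$. Using $\norm{\nabla_xP_\rho}\leq\varepsilon$ and the displacement bound gives \eqref{eq:inverseLip-nabla}. Adding the two gradient bounds gives \eqref{eq:inverseLip-game}, and setting $\varepsilon=0$ gives the final claim about stationary points.

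The only delicate points are the identity $\nabla_yP_\rho=(\rho-1)\Delta_y+p(w-y)$ and checking that each term of $\bar\rho$ exactly covers the corresponding condition in \eqref{eq:inverseLip-region}.
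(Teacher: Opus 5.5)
Your proof is correct and follows the paper's argument essentially step for step. Like the paper, you localize via $\rho\G(x,y)\leq B$ to enter the region \eqref{eq:inverseLip-region}, apply Assumption~\ref{ass:inverseLip}, and extract all bounds from $\nabla_yP_\rho=\nabla_yF(x,y)+\rho\Delta_y$ together with \eqref{eq:proximal-foc}. The differences are cosmetic and lead to the same constants: you bound $\norm{\nabla_yF(x,y)}$ using the lower estimate in \eqref{eq:gap-gradient-bounds} rather than the triangle-inequality bound $(p+L)\norm{w-y}$, and you write $\nabla_yP_\rho=(\rho-1)\Delta_y+p(w-y)$ instead of first proving $\norm{\nabla_yF(x,y)}\leq(1+p\kappa_s)\norm{\Delta_y}$.
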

The proof is provided in Appendix~\ref{app:transfer}

\begin{corollary}
\label{cor:stationary-exactness}
Under the assumptions of Theorem~\ref{thm:stationarity-transfer}, let
$\rho\geq\bar\rho$. If
\begin{equation}
    P_\rho(x,y)\leq C_0,
    \qquad
    \nabla P_\rho(x,y)=0,
\end{equation}
then
\begin{equation}
    \nabla_xF(x,y)=0,
    \qquad
    \nabla_yF(x,y)=0.
\end{equation}
\end{corollary}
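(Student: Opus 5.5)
The corollary is the case $\varepsilon=0$ of Theorem~\ref{thm:stationarity-transfer}, so the plan is to invoke that theorem directly and check that nothing degenerates at $\varepsilon=0$.

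Fix $\rho\geq\bar\rho$ and a point $(x,y)$ with $P_\rho(x,y)\leq C_0$ and $\nabla P_\rho(x,y)=0$.

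\begin{enumerate}
\item Check that \eqref{eq:inverseLip-low-level} holds with $\varepsilon=0$. It does, because $\norm{\nabla P_\rho(x,y)}=0\leq 0$.
\item Check that the denominators in Theorem~\ref{thm:stationarity-transfer} are positive. By \eqref{eq:inverseLip-threshold}, $\bar\rho\geq 2(1+p\kappa_s)$, so
\[
\rho-1-p\kappa_s\;\geq\;1+p\kappa_s\;>\;0 .
\]
Hence $C_\rho$ in \eqref{eq:inverseLip-game} is a finite constant.
\item Apply the estimate \eqref{eq:inverseLip-game} with $\varepsilon=0$. It gives
\[
\norm{\nabla_xF(x,y)}+\norm{\nabla_yF(x,y)}\leq C_\rho\cdot 0=0 ,
\]
so both partial gradients vanish.
\end{enumerate}

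The only possible obstacle is whether the theorem's hypotheses tacitly require $\varepsilon>0$, for instance through a division by $\varepsilon$ or a smallness condition on $\varepsilon$. The statement imposes neither, and its bounds are linear in $\varepsilon$, so $\varepsilon=0$ is admissible.

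If one preferred not to rely on the theorem, the argument would trace the same chain with $\varepsilon=0$:
\begin{itemize}
\item The bounds \eqref{eq:gap-gradient-bounds} together with the compactness in Assumption~\ref{ass:compact-sublevel} place $(x,y)$ inside the region \eqref{eq:inverseLip-region}.
\item Assumption~\ref{ass:inverseLip} then gives $\norm{w-y}\leq\kappa_s\norm{\Delta_y}$.
\item Combine \eqref{eq:penalty-gradient-y} with \eqref{eq:proximal-foc}, which gives $\nabla_yF(x,y)=p(w-y)-\Delta_y$. Then $0=\nabla_yP_\rho=\nabla_yF(x,y)+\rho\Delta_y$ yields $\norm{\Delta_y}\leq\rho^{-1}(1+p\kappa_s)\norm{\Delta_y}$. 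Since $\rho>1+p\kappa_s$, this forces $\Delta_y=0$.
\item Consequently $w=y$ and $\nabla_yF(x,y)=0$.
\item Finally, \eqref{eq:penalty-gradient-x} reduces to $\nabla_xF(x,y)=0$.
\end{itemize}
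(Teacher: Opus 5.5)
Your proposal is correct and is exactly the paper's argument: apply Theorem~\ref{thm:stationarity-transfer} with $\varepsilon=0$. The condition $\rho\geq\bar\rho\geq 2(1+p\kappa_s)$ keeps $\rho-1-p\kappa_s>0$, so $C_\rho$ is finite and \eqref{eq:inverseLip-game} forces both partial gradients to vanish. Your optional direct chain is also sound, though the localization step (putting $(x,y)$ in the region \eqref{eq:inverseLip-region}) needs \eqref{eq:gap-displacement-lower} to bound $\norm{w-y}$, not only \eqref{eq:gap-gradient-bounds}.
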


\begin{proof}
Apply Theorem~\ref{thm:stationarity-transfer} with $\varepsilon=0$.
Then \eqref{eq:inverseLip-delta-displacement} gives $w(x,y)=y$, and
\eqref{eq:inverseLip-game} gives the conclusion.
\end{proof}

\subsection{Exact gradient descent on the penalty objective}\label{exact-gd}
We first show how to obtain an initial point $z^0=(x^0,y^0)$ in the common low-level set, that is, $P_\rho(z^0)\leq C_0$ for a prescribed $\rho\geq\bar\rho$. 
\begin{proposition}
\label{lem:initial-point}
Suppose Assumption~\ref{ass:smoothness} holds and fix $\rho>0$. Starting from any $u^0\in\R^m$, consider the gradient ascent iteration $ u^{t+1}=u^t+\alpha \nabla_yF(x^0,u^t)$ within $T:=\left\lceil
        \frac{\rho\,\bigl(\overline F(x^0)-F(x^0,u^0)\bigr)}{\alpha(2-\alpha L)(p-L)}
    \right\rceil$ steps.
Then there exists $0\leq t\leq T$ such that
 $\norm{\nabla_yF(x^0,u^t)}^2\leq\frac{2(p-L)}{\rho}$,
    and consequently    
    $P_\rho(x^0,u^t)\leq C_0$.
In particular, $y^0:=u^t$ yields $P_\rho(x^0,y^0)\leq C_0$.
\end{proposition}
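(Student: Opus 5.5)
The argument is standard gradient-ascent analysis on the $L$-smooth function $u\mapsto F(x^0,u)$, combined with the upper bound on the gap from Lemma~\ref{lem:gap-properties}(iii). Here $\overline F(x^0):=\sup_y F(x^0,y)$, which I take to be finite since $C_0\ge \max_yF(x^0,y)+1$ presupposes it. I also assume $\alpha\in(0,2/L)$, so that $\alpha(2-\alpha L)>0$.

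\emph{Step 1: ascent inequality.} Since $\nabla_yF(x^0,\cdot)$ is $L$-Lipschitz by Assumption~\ref{ass:smoothness}, the descent lemma applied to $-F(x^0,\cdot)$ gives
\[
F(x^0,u^{t+1})\ \ge\ F(x^0,u^t)+\alpha\norm{\nabla_yF(x^0,u^t)}^2-\tfrac{L\alpha^2}{2}\norm{\nabla_yF(x^0,u^t)}^2
= F(x^0,u^t)+\tfrac{\alpha(2-\alpha L)}{2}\norm{\nabla_yF(x^0,u^t)}^2 .
\]

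\emph{Step 2: telescoping and contradiction.} Sum this inequality over $t=0,\dots,T-1$ and bound $F(x^0,u^T)\le\overline F(x^0)$. This gives
\[
\frac{\alpha(2-\alpha L)}{2}\sum_{t=0}^{T-1}\norm{\nabla_yF(x^0,u^t)}^2\le \overline F(x^0)-F(x^0,u^0).
\]
Suppose, for contradiction, that $\norm{\nabla_yF(x^0,u^t)}^2>2(p-L)/\rho$ for every $t\le T-1$. Then the left-hand side exceeds $T\,\alpha(2-\alpha L)(p-L)/\rho$. By the definition of $T$, this quantity is at least $\overline F(x^0)-F(x^0,u^0)$, which is a contradiction. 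Hence some $t\le T-1\le T$ satisfies $\norm{\nabla_yF(x^0,u^t)}^2\le 2(p-L)/\rho$.

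For the degenerate case $T=0$, we have $F(x^0,u^0)=\overline F(x^0)$, so $u^0$ is a maximizer and $\nabla_yF(x^0,u^0)=0$; hence $t=0$ works. The constant in $T$ leaves a factor of $2$ of slack, so boundary cases where equality holds in the ceiling cause no trouble.

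\emph{Step 3: the penalty bound.} At the index $t$ from Step~2, set $y=u^t$ and apply \eqref{eq:gap-gradient-bounds}:
\[
\G(x^0,u^t)\le \frac{\norm{\nabla_yF(x^0,u^t)}^2}{2(p-L)}\le \frac1\rho .
\]
It follows that
\[
P_\rho(x^0,u^t)=F(x^0,u^t)+\rho\,\G(x^0,u^t)\le \max_yF(x^0,y)+1\le C_0 .
\]

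Every step is routine. The only points needing care are the choice of step size $\alpha<2/L$ and the fact that $\overline F(x^0)$ is finite; the latter is implicit in the definition of $C_0$.
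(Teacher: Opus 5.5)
Your proof is correct and follows the same route as the paper. You run gradient ascent on the $L$-smooth function $F(x^0,\cdot)$, telescope the ascent inequality against $\overline F(x^0)$, and finish with the upper gap bound in \eqref{eq:gap-gradient-bounds}; you simply write out the analysis the paper calls standard, and under the weaker step-size condition $\alpha<2/L$ rather than the paper's $\alpha<1/L$.

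One aside is inaccurate but harmless: the given $T$ has no factor-of-$2$ slack, since the telescoped bound equals $2(p-L)/\rho$ exactly when $T$ equals the quantity inside the ceiling. Your argument is still valid because its contradiction hypothesis is the strict inequality $\norm{\nabla_yF(x^0,u^t)}^2>2(p-L)/\rho$.
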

\begin{proof}
Since $F(x^0,\cdot)$ is $L$-smooth, bounded above by $\overline F(x^0)$, and $\alpha < \frac{1}{L}$, the standard convergence analysis of gradient ascent with step size $\alpha \le \frac{1}{L}$ gives
$
    \min_{0\leq t\leq T}\norm{\nabla_yF(x^0,u^t)}^2
    \leq
    \frac{2(p-L)}{\rho}
$.
 For such $t$, the upper bound in \eqref{eq:gap-gradient-bounds} yields
\begin{equation*}
    P_\rho(x^0,u^t)
    =
    F(x^0,u^t)+\rho\,\G(x^0,u^t)
    \leq
    \overline F(x^0)+\frac{\rho}{2(p-L)}\norm{\nabla_yF(x^0,u^t)}^2
    \leq
    \overline F(x^0)+1
    \leq
    C_0.
    \qedhere
\end{equation*}
\end{proof}
Since $\rho$ is fixed throughout, $T$ is a constant independent of the target accuracy $\epsilon$, and this initialization does not affect the complexity bounds below.
For a fixed $\rho\geq\bar\rho$, let $z^0:=(x^0, y^0)$ be an
initial point satisfying Assumption~\ref{ass:compact-sublevel}, and consider
\begin{equation}
    z^{k+1}
    =z^k-\frac{1}{L_\rho}\nabla P_\rho(z^k).
    \label{eq:penalty-gradient-descent}
\end{equation}
Here $L_\rho$ is given in \eqref{eq:penalty-smoothness}. Equivalently,
if $w^k=w(x^k,y^k)$, the gradient in
\eqref{eq:penalty-gradient-descent} is computed from
\begin{equation*}
    \nabla P_\rho(x^k,y^k)
    =
    \nabla F(x^k,y^k)
    +\rho\bigl[
        \nabla F(x^k,w^k)-\nabla F(x^k,y^k)
    \bigr].
\end{equation*}
Given $w^k$, the outer gradient requires evaluations of $\nabla F$ at
$(x^k,y^k)$ and $(x^k,w^k)$. The point $w^k$ is the unique solution of
the strongly concave problem in \eqref{eq:w-def}. We next establish the convergence complexity of the exact gradient descent method.

\begin{theorem}
\label{thm:gd-complexity}
Suppose Assumptions~\ref{ass:smoothness}--\ref{ass:inverseLip} hold,
and fix $\rho\geq\bar\rho$. Let
\begin{equation}
    P_c^*:=\min_{z\in\K}P_{\rho_c}(z),
    \qquad
    \Delta_c:=C_0-P_c^*.
    \label{eq:common-descent-gap}
\end{equation}
Then the iterates in \eqref{eq:penalty-gradient-descent} satisfy
\begin{equation}
    P_\rho(z^{k+1})
    \leq
    P_\rho(z^k)
    -\frac{1}{2L_\rho}\norm{\nabla P_\rho(z^k)}^2,
    \label{eq:penalty-descent}
\end{equation}
and remain in the common compact set $\K$. Moreover, for every integer
$N\geq1$,
\begin{equation}
    \min_{0\leq k<N}
    \norm{\nabla P_\rho(z^k)}^2
    \leq
    \frac{2L_\rho\Delta_c}{N}.
    \label{eq:penalty-complexity-bound}
\end{equation}
Consequently, if
\begin{equation}
    N
    \geq
    \frac{2L_\rho\Delta_c C_\rho^2}{\epsilon^2},
    \label{eq:iteration-complexity}
\end{equation}
then at least one iterate $z^k=(x^k,y^k)$ satisfies
\begin{equation}
    \norm{\nabla_xF(x^k,y^k)}
    +\norm{\nabla_yF(x^k,y^k)}
    \leq\epsilon.
    \label{eq:epsilon-game-stationarity}
\end{equation}
For every fixed $\rho\geq\bar\rho$, the outer iteration complexity is
$O(\epsilon^{-2})$.
\end{theorem}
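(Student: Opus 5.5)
The plan is the standard descent-lemma argument, with the compact sublevel set of Assumption~\ref{ass:compact-sublevel} providing a uniform lower bound. Two steps precede it: using the monotonicity of $P_\rho$ in $\rho$ to pass from $P_\rho$ to $P_{\rho_c}$, and applying Theorem~\ref{thm:stationarity-transfer} at the end.

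\emph{Step 1 (descent and invariance).} By Lemma~\ref{lem:gap-properties}(v), $\nabla P_\rho$ is $L_\rho$-Lipschitz on all of $\R^{d+m}$. The descent lemma with step $1/L_\rho$ then gives \eqref{eq:penalty-descent} at every iterate, with no localization needed. Since $\rho\geq\bar\rho\geq\rho_c\geq\rho_C$ (I read $\rho_c$ as the constant $\rho_C$ of Assumption~\ref{ass:compact-sublevel}), and $P_\rho(z^0)\leq C_0$ by the choice of $z^0$ (Proposition~\ref{lem:initial-point}), induction on \eqref{eq:penalty-descent} gives $P_\rho(z^k)\leq C_0$ for all $k$. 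Hence every $z^k\in\K$.

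\emph{Step 2 (uniform lower bound).} Because $\G\geq0$ (Lemma~\ref{lem:gap-properties}(iii)), $P_\rho=F+\rho\G$ is nondecreasing in $\rho$. So for $z\in\K$ we have $P_\rho(z)\geq P_{\rho_c}(z)\geq P_c^*$, where the minimum $P_c^*$ exists since $P_{\rho_c}$ is continuous and $\K$ is compact. Telescoping \eqref{eq:penalty-descent} over $k=0,\dots,N-1$ gives
\[
\frac{1}{2L_\rho}\sum_{k<N}\norm{\nabla P_\rho(z^k)}^2\leq P_\rho(z^0)-P_\rho(z^N)\leq C_0-P_c^*=\Delta_c .
\]
Bounding the minimum by the average yields \eqref{eq:penalty-complexity-bound}.

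\emph{Step 3 (transfer).} If $N$ satisfies \eqref{eq:iteration-complexity}, then some $k<N$ has $\norm{\nabla P_\rho(z^k)}\leq \varepsilon:=\epsilon/C_\rho$ and $P_\rho(z^k)\leq C_0$. Theorem~\ref{thm:stationarity-transfer} then gives $\norm{\nabla_xF}+\norm{\nabla_yF}\leq C_\rho\varepsilon=\epsilon$ at $z^k$. Here $\rho-1-p\kappa_s\geq \rho/2>0$ because $\rho\geq 2(1+p\kappa_s)$, so $C_\rho$ is finite. Since $\rho$ is fixed, $L_\rho$, $\Delta_c$ and $C_\rho$ are constants, and $N=O(\epsilon^{-2})$.

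The only delicate point is Step 2. The lower bound must not depend on $\rho$, and the iterates must stay in $\K$ so that both the minimum over $\K$ and the hypothesis $P_\rho\leq C_0$ of Theorem~\ref{thm:stationarity-transfer} apply. Both follow from monotone descent together with the monotonicity of $P_\rho$ in $\rho$.
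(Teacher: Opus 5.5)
Your proposal is correct and follows the paper's proof essentially step for step: the descent lemma from global $L_\rho$-smoothness, invariance of the $C_0$-sublevel set, the $\rho$-independent lower bound $P_\rho\geq P_{\rho_c}\geq P_c^*$ on $\K$ used in the telescoping sum, and Theorem~\ref{thm:stationarity-transfer} applied at the best iterate. The only cosmetic difference is that you place the iterates in $\K$ by applying Assumption~\ref{ass:compact-sublevel} directly at $\rho\geq\rho_C$, whereas the paper goes through $P_{\rho_c}(z^k)\leq P_\rho(z^k)\leq C_0$.
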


The proof in Appendix~\ref{app:proof-gd} applies the descent lemma to \(P_\rho\), uses \(P_\rho\geq P_{\rho_c}\) to ensure that all iterates remain in \(\mathcal K\), and invokes Theorem~\ref{thm:stationarity-transfer} at the iterate with the smallest gradient norm.

\subsection{Inexact gradient descent on the penalty objective}\label{subsec:inexact-gd}

In practice, exact optimal solutions are often not available, and we instead obtain an approximate
best response \(\widetilde w^k\) satisfying
\begin{equation}
\|\widetilde w^k-w(x^k,y^k)\|
\leq \delta ,
\label{eq:inexact-best-response}
\end{equation}
for some \(\delta>0\). We then replace the exact penalty gradient by an
inexact gradient \(\widetilde g^k\) and perform the update
\begin{equation}
z^{k+1}
=
z^k-\frac{1}{L_\rho}\widetilde g^k, \qquad \widetilde g^k
:=
\nabla F(x^k,y^k)
+\rho\bigl[
\nabla F(x^k,\widetilde w^k)-\nabla F(x^k,y^k)
\bigr].
\label{eq:inexact-penalty-gradient-descent}
\end{equation}

Given \eqref{eq:inexact-best-response}, the inexact gradient error satisfies
\begin{equation}
\|\widetilde g^k-\nabla P_\rho(z^k)\|
= \rho \|\nabla F(x^k,\widetilde w^k)-\nabla F(x^k,w^k)\|
\leq \rho L \|\widetilde w^k-w^k\|.
\label{eq:inexact-gradient-error}
\end{equation}
We next establish the convergence complexity of the inexact gradient descent method.

\begin{theorem}
\label{thm:inexact-gd-complexity}
Suppose Assumptions~\ref{ass:smoothness}--\ref{ass:inverseLip} hold,
and fix $\rho\geq\bar\rho$. Let $P_c^*$ and $\Delta_c$ be defined as
in \eqref{eq:common-descent-gap}, and suppose that
\eqref{eq:inexact-best-response} holds at every iteration. Then the iterates in \eqref{eq:inexact-penalty-gradient-descent} satisfy
\begin{equation}
P_\rho(z^{k+1})
\leq
P_\rho(z^k)
-\frac{1}{2L_\rho}
\|\nabla P_\rho(z^k)\|^2
+\frac{\rho^2L^2\delta^2}{2L_\rho}.
\label{eq:inexact-penalty-descent}
\end{equation}

Moreover, for every integer $N\geq1$ there exists an index
$k$ with $0\leq k<N$ such that
\begin{equation}
P_\rho(z^j)\leq C_0
\quad\text{for all }0\leq j\leq k,
\qquad\text{in particular }
P_\rho(z^k)\leq C_0
\text{ and } z^k\in\K,
\label{eq:inexact-iterate-in-level-set}
\end{equation}
and
\begin{equation}
\|\nabla P_\rho(z^k)\|^2
\leq
\frac{2L_\rho\Delta_c}{N}
+\rho^2L^2\delta^2.
\label{eq:inexact-penalty-complexity}
\end{equation}
For this iterate,
\begin{equation}
\|\nabla_xF(x^k,y^k)\|
+
\|\nabla_yF(x^k,y^k)\|
\leq
C_\rho
\sqrt{
\frac{2L_\rho\Delta_c}{N}
+\rho^2L^2\delta^2
}.
\label{eq:inexact-game-stationarity}
\end{equation}

In particular, if \eqref{eq:iteration-complexity} holds and $\delta\leq\epsilon$,
then at least one iterate $z^k=(x^k,y^k)$ satisfies
\begin{equation}
\|\nabla_xF(x^k,y^k)\|
+
\|\nabla_yF(x^k,y^k)\|
\leq
(1+C_\rho \rho L)\epsilon.
\end{equation}
For every fixed \(\rho\geq\bar\rho\), choosing
\(\delta=O(\epsilon)\) therefore preserves the
\(O(\epsilon^{-2})\) outer iteration complexity.
\end{theorem}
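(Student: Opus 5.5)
The plan is to run the standard inexact-gradient descent argument on $P_\rho$, using the $L_\rho$-smoothness from Lemma~\ref{lem:gap-properties}(v). Then I localize the iterates in $\K$ via Assumption~\ref{ass:compact-sublevel}, and finally invoke Theorem~\ref{thm:stationarity-transfer} at a well-chosen iterate.

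\textbf{Step 1: one-step descent.} Write $e^k:=\widetilde g^k-\nabla P_\rho(z^k)$, so that $\|e^k\|\le\rho L\delta$ by \eqref{eq:inexact-gradient-error}. The descent lemma with step $1/L_\rho$ gives
\[
P_\rho(z^{k+1})\le P_\rho(z^k)-\tfrac{1}{L_\rho}\langle\nabla P_\rho(z^k),\widetilde g^k\rangle+\tfrac{1}{2L_\rho}\|\widetilde g^k\|^2
= P_\rho(z^k)-\tfrac{1}{2L_\rho}\|\nabla P_\rho(z^k)\|^2+\tfrac{1}{2L_\rho}\|e^k\|^2 .
\]
This follows from the polarization identity $-\langle a,a+e\rangle+\tfrac12\|a+e\|^2=-\tfrac12\|a\|^2+\tfrac12\|e\|^2$. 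The bound on $\|e^k\|$ then yields \eqref{eq:inexact-penalty-descent}. No localization is needed here, since $P_\rho$ is globally $L_\rho$-smooth.

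\textbf{Step 2: selecting the index (the main obstacle).} Because of the error term, $P_\rho$ need not decrease, so the iterates could leave the level set $\{P_\rho\le C_0\}$. I handle this by contradiction. Suppose that for every $k<N$, either some $j\le k$ has $P_\rho(z^j)>C_0$, or $\|\nabla P_\rho(z^k)\|^2>\frac{2L_\rho\Delta_c}{N}+\rho^2L^2\delta^2$. Since $P_\rho(z^0)\le C_0$ (Proposition~\ref{lem:initial-point}), induction on $k$ gives the following. As long as all earlier iterates are in the level set, the second alternative holds at $k$. By Step 1, $P_\rho(z^{k+1})\le P_\rho(z^k)-\frac{1}{2L_\rho}\cdot\frac{2L_\rho\Delta_c}{N}<P_\rho(z^k)\le C_0$. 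So the iterates stay in the level set and the gradient alternative holds for all $k<N$. Telescoping then gives
\[
P_\rho(z^N)< C_0-N\cdot\frac{\Delta_c}{N}=P_c^*.
\]
But $z^N$ lies in the level set, hence in $\K$ by Assumption~\ref{ass:compact-sublevel}. Also $\mathcal G\ge0$ and $\rho\ge\rho_c$ give $P_\rho\ge P_{\rho_c}\ge P_c^*$ on $\K$, which is a contradiction. This produces an index $k<N$ satisfying both \eqref{eq:inexact-iterate-in-level-set} and \eqref{eq:inexact-penalty-complexity}. The point needing care is that the per-step decrease is strict in the bad case, so the cumulative error never pushes an iterate above $C_0$ before the good index is reached.

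\textbf{Step 3: transfer and rate.} At this $k$ we have $P_\rho(z^k)\le C_0$ and $\|\nabla P_\rho(z^k)\|\le\varepsilon:=\sqrt{2L_\rho\Delta_c/N+\rho^2L^2\delta^2}$. Theorem~\ref{thm:stationarity-transfer} then gives \eqref{eq:inexact-game-stationarity}. If $N$ satisfies \eqref{eq:iteration-complexity}, then $C_\rho\sqrt{2L_\rho\Delta_c/N}\le\epsilon$. Using $\sqrt{a+b}\le\sqrt a+\sqrt b$ and $\delta\le\epsilon$, the right-hand side is at most $\epsilon+C_\rho\rho L\epsilon$. Since $\rho$ is fixed, all constants are independent of $\epsilon$, so taking $\delta=O(\epsilon)$ keeps the $O(\epsilon^{-2})$ outer iteration count.
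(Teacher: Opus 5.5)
Your proof is correct. Steps 1 and 3 match the paper's proof; the paper bounds the final quantity by $\sqrt{1+C_\rho^2\rho^2L^2}\,\epsilon$ before relaxing to $(1+C_\rho\rho L)\epsilon$, while you use $\sqrt{a+b}\le\sqrt a+\sqrt b$, which is immaterial.

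The genuine difference is in how you select the index. The paper thresholds at the noise level $\rho L\delta$. It first records the conditional monotonicity $\|\nabla P_\rho(z^j)\|\ge\rho L\delta\Rightarrow P_\rho(z^{j+1})\le P_\rho(z^j)$, and then splits into two cases:
\begin{itemize}
\item If some gradient falls below $\rho L\delta$, it takes the first such index, where \eqref{eq:inexact-penalty-complexity} holds trivially.
\item Otherwise every iterate up to $z^N$ stays in $\{P_\rho\le C_0\}$, so it telescopes, bounds $\sum_{k<N}\|\nabla P_\rho(z^k)\|^2\le 2L_\rho\Delta_c+N\rho^2L^2\delta^2$, and takes the iterate with the smallest gradient.
\end{itemize}
You instead threshold directly at the target $\tau=2L_\rho\Delta_c/N+\rho^2L^2\delta^2$. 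Because $\tau$ exceeds the noise floor $\rho^2L^2\delta^2$, every bad step lowers $P_\rho$ by more than $\Delta_c/N$. That single fact both keeps the iterates in the level set and makes the averaging step unnecessary. The two cases therefore collapse into one first-hitting-time argument, and you select the first iterate meeting the bound rather than the argmin.

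What each route buys: yours is shorter and fully unified. The paper's isolates the structural fact that descent can fail only when $\|\nabla P_\rho\|<\rho L\delta$. In the no-small-gradient case it also controls the whole trajectory up to $z^N$ and the summed squared gradients.

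One small precision point in your displayed chain. The strict inequality actually comes from the first step, via the strict bad-case alternative $\|\nabla P_\rho(z^k)\|^2>\tau$. The second step only needs $\Delta_c\ge0$. That holds because $z^0\in\K$ and $P_c^*\le P_{\rho_c}(z^0)\le P_\rho(z^0)\le C_0$, and you should note this explicitly, since your monotonicity claim uses it.
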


The proof, in Appendix~\ref{app:proof-inexact}, follows that of Theorem~\ref{thm:gd-complexity}: the gradient error \eqref{eq:inexact-gradient-error} contributes the additive term, and descent holds while $\norm{\nabla P_\rho(z^k)}\geq\rho L\delta$, which keeps the selected iterate in $\K$.

\begin{corollary}[Total gradient complexity]
The best-response problem in \eqref{eq:w-def} is strongly concave
and smooth, so gradient ascent initialized at $y^k$ returns
$\widetilde w^k$ satisfying \eqref{eq:inexact-best-response} after
$\mathcal{O}(\log(1/\delta))$ gradient evaluations.
Taking $\delta\le \frac{\epsilon}{\sqrt{2}C_\rho \rho L}$,
the total number of gradient evaluations of $F$ needed to reach an
$\epsilon$-stationary point is
$\widetilde{\mathcal O}(\epsilon^{-2})$.    
\end{corollary}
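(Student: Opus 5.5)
The plan has two parts: count the work per outer iteration, then multiply by the number of outer iterations from Theorem~\ref{thm:inexact-gd-complexity}.

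\emph{Inner solve.} For fixed $(x^k,y^k)$ the map $v\mapsto F(x^k,v)-\frac p2\|v-y^k\|^2$ is $(p-L)$-strongly concave by Lemma~\ref{lem:gap-properties}(i). Its gradient is $(p+L)$-Lipschitz. Gradient ascent with step $1/(p+L)$ therefore contracts linearly at the condition-number rate $q:=1-(p-L)/(p+L)$ toward $w^k$. Initialize at $u=y^k$. Then the initial distance is $\|y^k-w^k\|$, which on the low-level set is at most $D=\sqrt{2B/(p-L)}$ by \eqref{eq:gap-displacement-lower}, since $\rho\,\G\le P_\rho-F\le C_0-\underline F_\K$ and $\rho\ge1$. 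Off the level set I would use a crude bound from compactness of $\K$ together with the global Lipschitz estimate of $w$. Hence $t_k=\lceil \log(D/\delta)/\log(1/q)\rceil=\mathcal O(\log(1/\delta))$ inner gradient evaluations suffice to guarantee \eqref{eq:inexact-best-response}. Each inner step costs one evaluation of $\nabla_yF$.

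\emph{Outer accuracy.} Take $\delta\le \epsilon/(\sqrt2 C_\rho\rho L)$ and choose $N$ such that $2L_\rho\Delta_c/N\le \epsilon^2/(2C_\rho^2)$, i.e.\ $N=\lceil 4L_\rho\Delta_cC_\rho^2/\epsilon^2\rceil$. Plugging these into \eqref{eq:inexact-game-stationarity} gives the bound $C_\rho\sqrt{\epsilon^2/(2C_\rho^2)+\epsilon^2/(2C_\rho^2)}=\epsilon$ at the selected iterate. The selected iterate lies in $\K$ by \eqref{eq:inexact-iterate-in-level-set}, so Theorem~\ref{thm:stationarity-transfer} applies there.

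\emph{Total count.} Each outer step uses two full gradients, at $(x^k,y^k)$ and at $(x^k,\widetilde w^k)$, plus the $t_k$ inner evaluations. Since $\rho\ge\bar\rho$ is fixed, the constants $L_\rho,C_\rho,\Delta_c$ are independent of $\epsilon$. The total is therefore
\[
N\,(2+t_k)=\mathcal O\!\left(\epsilon^{-2}\log(1/\epsilon)\right)=\widetilde{\mathcal O}(\epsilon^{-2}).
\]
The initialization of Proposition~\ref{lem:initial-point} adds an $\epsilon$-independent constant number of evaluations.

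The main obstacle is the uniform bound on the inner initial distance $\|y^k-w^k\|$ for every $k<N$. Iterates after the selected index may leave the level set, so the bound $D$ need not apply to them. I would handle this by stopping the algorithm at the selected index. If that is not possible, I would instead use the Lipschitz bound \eqref{eq:w-lipschitz} together with the bounded step sizes to get a polynomial-in-$N$ bound, which contributes only a further logarithmic factor.
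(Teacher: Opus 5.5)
Your proposal is correct and follows the same route as the paper. The paper justifies the corollary only by combining the linear rate of gradient ascent on the $(p-L)$-strongly concave, $(p+L)$-smooth best-response problem with the outer bound \eqref{eq:inexact-game-stationarity}; its choice $\delta\le\epsilon/(\sqrt2 C_\rho\rho L)$ corresponds exactly to your even split of $\epsilon^2/C_\rho^2$ between the two terms.

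Your treatment of the inner initial distance is more careful than the paper's. Your early-stopping fix can be implemented by halting at the first iterate whose residual $\|\nabla_xF(x^k,y^k)\|+\|\nabla_yF(x^k,y^k)\|$ (available for free from $\widetilde g^k$) is at most $\epsilon$. This happens no later than the selected index, so every processed iterate lies in the level set.
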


\section{Numerical experiments}
\label{sec:nr-experiments}
In this section we evaluate our method through quadratic examples and compare its performance with other relevant algorithms.
We consider the quadratic family
\begin{equation}
\label{eq:nr-quadratic}
  \min_{x\in\mathbb R^d}\;\max_{y\in\mathbb R^{2d}}
  F_d(x,y)
  :=-\frac12\|x\|_2^2
  +x^\top\begin{bmatrix}I_d&2I_d\end{bmatrix}Q_d^\top y
  +\frac12 y^\top Q_d
  \begin{bmatrix}I_d&0\\0&-I_d\end{bmatrix}Q_d^\top y,
\end{equation}
where $Q_d\in\mathbb R^{2d\times2d}$ is orthogonal. We test $d=1$
with $Q_1=I_2$ and $d=10$ with a fixed random orthogonal $Q_{10}$,
giving problems of total dimension $3$ and $30$, respectively.
In the coordinates $\widetilde y=Q_d^\top y$, the objective decomposes
into $d$ identical three-variable quadratic blocks. Consequently, both
problems have the unique game stationary point $z^\star=(0,0)$, while
$\nabla^2_{yy}F_d$ has eigenvalues $+1$ and $-1$, each with multiplicity
$d$. Thus $y^\star$ is a saddle point of $F_d(x^\star,\cdot)$.
The rotation mixes the displayed $y$-coordinates but preserves the
Euclidean geometry and the dynamics of the gradient methods under the
corresponding change of variables.

In both experiments, we compare inexact gradient descent on $P_\rho$
with joint GD on $F_d$ using step size $0.05$ and simultaneous GDA with
$(\tau_x,\tau_y)\in\{(0.05,0.05),(0.01,0.2),(0.1,0.01)\}$.
All methods use the same initial point within each experiment.
Each regularized response is computed by gradient ascent initialized
at the current $y^k$. We report the distance $\|z^k-z^\star\|_2$ and
the stationarity residual
\[
 r_k:=\|\nabla_xF_d(z^k)\|_2+\|\nabla_yF_d(z^k)\|_2,
\]
both against outer updates and, for the residual, against gradient
evaluations including all inner iterations. Runs are terminated upon
reaching $r_k\le10^{-6}$ or $\|z^k\|_2\ge10^6$.

For the three-dimensional experiment, we take $z^0=(1,-1,2)$ and
$\rho=16$, with outer step size $1/22$ and inner ascent step size $1/6$.
For the thirty-dimensional experiment, we use a random unit-norm $z^0$,
$p=3.75$, and $\rho=9.50$, with outer step size $0.076$ and inner
ascent step size $0.21$. In the latter experiment, the inner solve stops
when the norm of the gradient of the regularized inner objective is at
most $10^{-10}$.

\begin{figure}[!tbp]
  \centering
  \includegraphics[width=\linewidth]{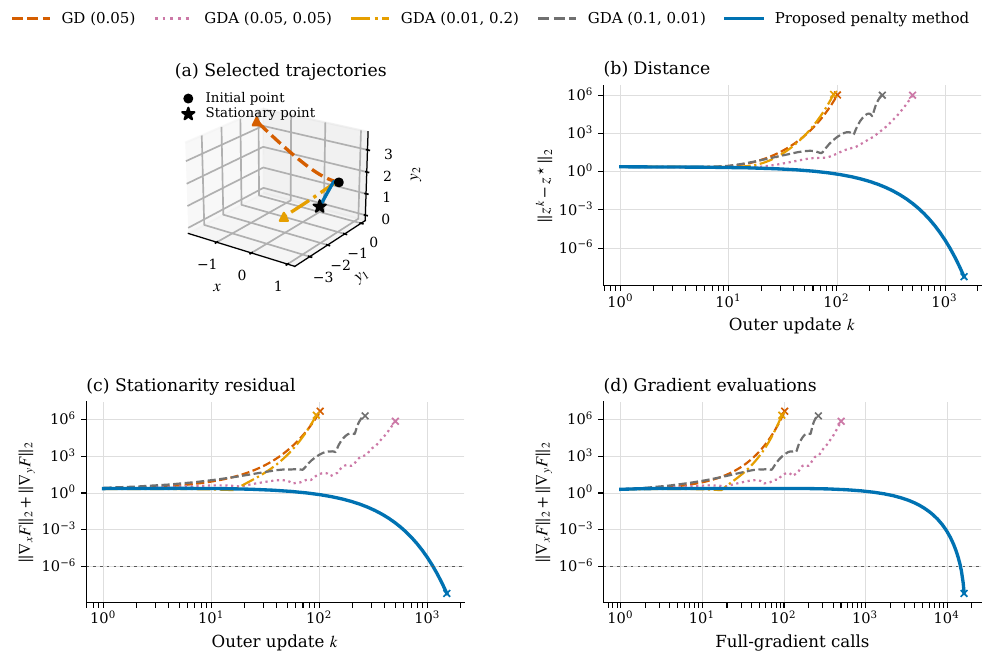}
  \caption{Three-dimensional comparison for \eqref{eq:nr-quadratic}
  with $d=1$, $Q_1=I_2$, and $z^0=(1,-1,2)$.
  (a) Trajectories up to their first exit from $\|z\|_2\le4$.
  (b) Distance to $z^\star$.
  (c) Stationarity residual versus outer update.
  (d) Residual versus gradient evaluations, including inner solves.}
  \label{fig:nr-quadratic}
\end{figure}

\begin{figure}[!tbp]
  \centering
  \includegraphics[width=\linewidth]{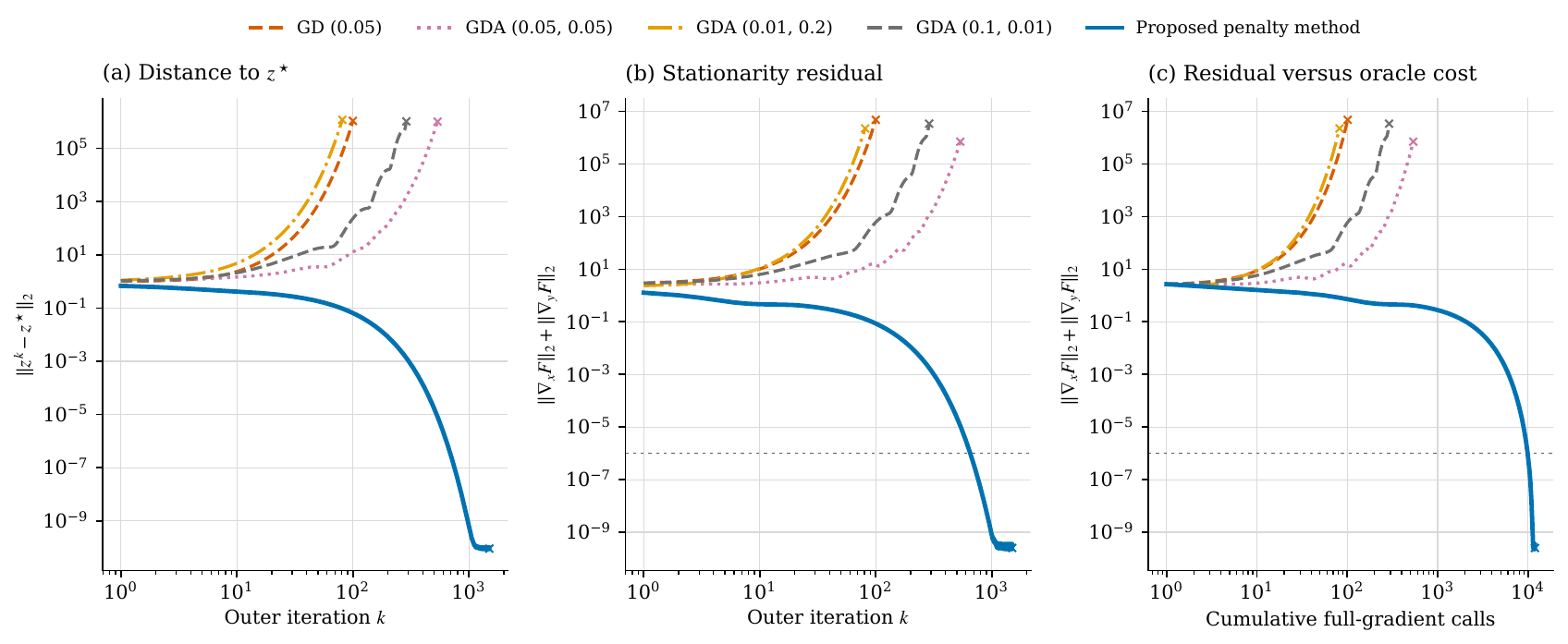}
  \caption{Thirty-dimensional comparison for \eqref{eq:nr-quadratic}
  with $d=10$, a fixed random orthogonal $Q_{10}$, and a common random
  unit-norm initialization.
  (a) Distance to $z^\star$.
  (b) Stationarity residual versus outer update.
  (c) Residual versus gradient evaluations, including inner solves.}
  \label{fig:nr-quadratic-30d}
\end{figure}

The proposed method reaches $r_k\le10^{-6}$ in both experiments:
$1{,}120$ outer updates and $14{,}586$ gradient evaluations in dimension
$3$, and $654$ outer updates and $9{,}910$ gradient evaluations in
dimension $30$. In contrast, GD and all three GDA configurations reach
$\|z^k\|_2\ge10^6$ within $93$--$501$ updates in dimension $3$ and
$81$--$536$ updates in dimension $30$
(Figures~\ref{fig:nr-quadratic} and~\ref{fig:nr-quadratic-30d}).
These results illustrate convergence of the proposed method to game
stationarity despite indefinite max-player curvature. Since the rotated
problem preserves the block spectrum and uses different numerical
parameters, the iteration counts are not intended as a comparison of
convergence speed across dimensions.

\subsubsection*{AI Disclosure Statement}
GPT-5.6 was utilized solely for editing and improving the clarity and readability of the writing in this paper.

\newpage
\bibliography{regularized_gap_penalty_introduction1_revised}

\newpage
\appendix

\section{Proofs}
\label{app:proofs}

\subsection{Proof of Lemma~\ref{lem:gap-properties}}
\label{app:proof-gap-properties}

\begin{proof}
\begin{enumerate}[label=(\roman*)]
\item For fixed $(x,y)$, write $w=w(x,y)$ and define
\begin{equation*}
    q_{x,y}(v)
    :=F(x,v)-\frac{p}{2}\norm{v-y}^2.
\end{equation*}
Since $F(x,\cdot)$ is $L$-smooth, it is $L$-weakly concave.
Hence $q_{x,y}$ is $(p-L)$-strongly concave, proving uniqueness of
$w(x,y)$. Moreover, the first-order condition of $q_{x,y}$ is exactly \eqref{eq:proximal-foc}.

\item Write the first-order conditions \eqref{eq:proximal-foc} at $z_1$ and
$z_2$:
\begin{equation}
    p(w_i-y_i)=\nabla_yF(x_i,w_i),
    \qquad i=1,2.
\end{equation}
After subtraction, rearranging the terms and applying the smoothness of $F$ in Assumption~\ref{ass:smoothness},
\begin{align*}
    p\norm{w_1-w_2}
    &\leq
    p\norm{y_1-y_2}
    +\norm{\nabla_yF(x_1,w_1)-\nabla_yF(x_2,w_2)}\\
    &\leq
    p\norm{y_1-y_2}
    +L\norm{x_1-x_2}
    +L\norm{w_1-w_2}.
\end{align*}
Rearranging gives
\begin{align*}
    \|w_1-w_2\| \leq \frac{L\norm{x_1-x_2}+p\norm{y_1-y_2}}{p-L} \leq \frac{\sqrt{L^2 + p^2}}{p-L}\|z_1-z_2\|,
\end{align*}
where the second inequality follows from the Cauchy--Schwarz inequality.

\item Strong concavity and $\nabla q_{x,y}(w)=0$ imply
\begin{equation*}
    q_{x,y}(w)-q_{x,y}(y)
    \geq
    \frac{p-L}{2}\norm{w-y}^2.
\end{equation*}
Note that the left-hand side is $\G(x,y)$ since $q_{x,y}(y) = F(x,y)$, which proves \eqref{eq:gap-displacement-lower}.

To prove \eqref{eq:gap-gradient-bounds}, let $g:=\nabla_yF(x,y)$ and
substitute $v=y+s$ in \eqref{eq:w-def}--\eqref{eq:G-def}, so that
\begin{equation}
    \G(x,y)=\max_{s\in\R^m}\phi(s),
    \qquad
    \phi(s):=F(x,y+s)-F(x,y)-\frac{p}{2}\norm{s}^2 .
    \label{eq:gap-as-max-in-s}
\end{equation}
Applying the smoothness of $F(x,\cdot)$ and subtracting $\frac{p}{2}\norm{s}^2$ from each side, for all $s$, we have
\begin{equation}
    \langle g,s\rangle-\frac{p+L}{2}\norm{s}^2
    \;\leq\;
    \phi(s)
    \;\leq\;
    \langle g,s\rangle-\frac{p-L}{2}\norm{s}^2 .
    \label{eq:phi-two-sided}
\end{equation}
For any $b>0$ one has
$\max_{s}\{\langle g,s\rangle-\frac{b}{2}\norm{s}^2\}=\norm{g}^2/(2b)$
attained at $s=g/b$. Taking the maximum over $s$ in the right-hand inequality of \eqref{eq:phi-two-sided} and using $b=p-L>0$ proves the upper bound in \eqref{eq:gap-gradient-bounds}.

For the lower bound, note that the maximum in
\eqref{eq:gap-as-max-in-s} dominates the value of $\phi$ at any single point.
Evaluating the left-hand inequality of \eqref{eq:phi-two-sided} at the choice
$s=g/(p+L)$, which maximizes that lower bound, gives
\begin{equation*}
    \G(x,y)
    \geq
    \phi\Bigl(\frac{g}{p+L}\Bigr)
    \geq
    \frac{\norm{g}^2}{2(p+L)} .
\end{equation*}
The equivalence \eqref{eq:gap-zero-equivalence} follows immediately.

\item Because the maximizer is unique, Danskin's theorem gives
\eqref{eq:gap-gradient-x} and the first equality in
\eqref{eq:gap-gradient-y}. The second equality in \eqref{eq:gap-gradient-y} follows from
\eqref{eq:proximal-foc}. Equations \eqref{eq:penalty-gradient-x} and
\eqref{eq:penalty-gradient-y} follow from
$P_\rho=F+\rho\G$.

\item Define
$\widehat z:=(x,w(x,y))$. Since the norm on $(x,y)$ is Euclidean, $\norm{x_1-x_2}\leq\norm{z_1-z_2}$, and
the bound $\norm{w_1-w_2}\leq\kappa_w\norm{z_1-z_2}$ from \eqref{eq:w-lipschitz} yields
\begin{equation*}
    \norm{\widehat z_1-\widehat z_2}^2
    = \norm{x_1-x_2}^2+\norm{w_1-w_2}^2
    \leq (1+\kappa_w^2)\norm{z_1-z_2}^2 .
\end{equation*}
Hence, the map $z\mapsto\widehat z$ is $\sqrt{1+\kappa_w^2}$-Lipschitz. Equations
\eqref{eq:gap-gradient-x}--\eqref{eq:gap-gradient-y} imply
  $\nabla\G(z)=\nabla F(\widehat z)-\nabla F(z)$. Hence, by the triangle inequality and the Lipschitz continuity of $\nabla F$, we have
\begin{align*}
\norm{\nabla\G(z_1)-\nabla\G(z_2)} &\leq \|\nabla F(\widehat z_1)-\nabla F(\widehat z_2)\| + \|\nabla F(z_1)-\nabla F(z_2)\| \\
&\le L \|\widehat z_1 - \widehat z_2\| + L \|z_1 - z_2\| \le L (1 + \sqrt{1 + \kappa_w^2}) \|z_1 - z_2\|.
\end{align*}
Together with $P_\rho = F + \rho\G$, we have
\begin{align*}
    \norm{\nabla P_\rho(z_1)-\nabla P_\rho(z_2)}
    &\leq \|\nabla F(z_1)-\nabla F(z_2)\| + \rho \|\nabla \G ( z_1)-\nabla \G(z_2)\|  \\
    &\leq L\|z_1 - z_2\| + \rho L (1 + \sqrt{1 + \kappa_w^2}) \|z_1 - z_2\| \\
    &= (L + \rho L (1 + \sqrt{1+\kappa_w^2})) \|z_1 - z_2\|,
\end{align*}
where we are applying the Lipschitz continuity of $\nabla F$ and $\nabla \G$ in the second inequality. This proves that $P_\rho$ has a globally Lipschitz gradient with constant $L_\rho = L + \rho L (1 + \sqrt{1+\kappa_w^2})$.
\end{enumerate}
\end{proof}



\subsection{Proofs of the sufficient conditions for Assumption~\ref{ass:inverseLip}}
\label{app:secant}

\subsubsection{Proof of Proposition~\ref{prop:hessian-nondegeneracy}}
\label{app:proof-hessian}
\begin{proof}
We first claim that there exists $\eta_0>0$ such that
\begin{equation}
    \sigma_{\min}\bigl(\nabla_{yy}^2F(x,y)\bigr)\geq\frac{3\mu_0}{4}
    \text{ whenever }
    (x,y)\in\K
    \text{ and }
    \norm{\nabla_yF(x,y)}\leq\eta_0 .
    \label{eq:hessian-near-critical-bound}
\end{equation}
Suppose not, then there would exist a sequence
$z_j=(x_j,y_j)\in\K$ such that
\begin{equation*}
    \norm{\nabla_yF(x_j,y_j)}\to 0,
    \qquad
    \sigma_{\min}\bigl(\nabla_{yy}^2F(x_j,y_j)\bigr)
    <\frac{3\mu_0}{4}.
\end{equation*}
Compactness of $\K$ gives a convergent subsequence with limit
$z_\star=(x_\star,y_\star)\in\K$. By continuity,
\begin{equation*}
    \nabla_yF(x_\star,y_\star)=0,
    \qquad
    \sigma_{\min}\bigl(\nabla_{yy}^2F(x_\star,y_\star)\bigr)
    \leq\frac{3\mu_0}{4},
\end{equation*}
which contradicts \eqref{eq:uniform-nondegeneracy}. Let $U$ be an open
neighborhood of $\K$ on which $F$ is twice continuously differentiable.
Since $\K$ is compact, there exists $r_1>0$ such that the closed enlargement
$
    \K_{r_1}
    :=
    \{
        z'\in\R^{d+m}:
        \inf_{z\in\K}\norm{z'-z}\leq r_1
    \}$
is contained in $U$. As $\K_{r_1}$ is compact,
$\nabla_{yy}^2F$ is uniformly continuous on $\K_{r_1}$. Therefore, there
exists $r_0\in(0,r_1]$ such that
\begin{equation}
    \norm{\nabla_{yy}^2F(x,v)-\nabla_{yy}^2F(x,y)}
    \leq \frac{\mu_0}{4}, \text{ for all } (x,y)\in\K \text{ with } \norm{v-y}\leq r_0.
    \label{eq:hessian-uniform-continuity}
\end{equation}

Set $\eta_s:=\min\{\eta_0,\,pr_0\}$ and suppose that
$(x,y)\in\K$, $w=w(x,y)$, and
$\max\{\norm{\nabla_yF(x,y)},\norm{\nabla_yF(x,w)}\}\leq\eta_s$.
The first-order condition \eqref{eq:proximal-foc} then gives
$\norm{w-y}=\norm{\nabla_yF(x,w)}/p\leq r_0$.  Define
\begin{equation*}
    A(x,y)
    :=
    \int_0^1
    \nabla_{yy}^2F(x,y+t(w-y))\,dt.
\end{equation*}
Then
$
    \norm{A(x,y)-\nabla_{yy}^2F(x,y)}
    \leq
    \int_0^1
    \norm{
        \nabla_{yy}^2F\bigl(x,y+t(w-y)\bigr)
        -\nabla_{yy}^2F(x,y)
    }\,dt \leq \frac{\mu_0}{4}$.
Combining this estimate with
\eqref{eq:hessian-near-critical-bound} and the singular-value
perturbation inequality proves
\begin{align*}
\sigma_{\min}\bigl(A(x,y)\bigr)
    \geq
    \sigma_{\min}\bigl(\nabla_{yy}^2F(x,y)\bigr)
    -\norm{A(x,y)-\nabla_{yy}^2F(x,y)}
    \geq \frac{3\mu_0}{4}-\frac{\mu_0}{4}
    =\frac{\mu_0}{2}.
\end{align*}

The fundamental theorem of calculus gives
\begin{equation*}
    \nabla_yF(x,w)-\nabla_yF(x,y)
    =A(x,y)(w-y).
\end{equation*}
Consequently,
\begin{equation*}
    \norm{w-y}
    \leq
    \frac{2}{\mu_0}
    \norm{\nabla_yF(x,w)-\nabla_yF(x,y)}.
\end{equation*}
Thus Assumption~\ref{ass:inverseLip} holds with
$\eta_s=\min\{\eta_0, \, pr_0\}$, $\delta_s=r_0$, and
$\kappa_s=2/\mu_0$.
We note that the proximal structure of $w$ entered only through
\eqref{eq:proximal-foc}, which was used to bound $\norm{w-y}$; the
argument shows more generally that
$\norm{v-y}\leq\frac{2}{\mu_0}\norm{\nabla_yF(x,v)-\nabla_yF(x,y)}$
for every $(x,y)\in\K$ with $\norm{\nabla_yF(x,y)}\leq\eta_0$ and every
$v$ with $\norm{v-y}\leq r_0$.
\end{proof}

\subsubsection{Proof of Proposition~\ref{prop:constrained-envelope-inverseLip}}
\label{app:proof-envelope}
\begin{proof}
Throughout the proof we write
\begin{equation*}
    \phi_{x,y}(u):=g(x,u)-\frac{\lambda}{2}\norm{u-y}^2,
    \qquad
    \mathcal C:=\{u:c(u)\leq0\},
\end{equation*}
so that $F(x,y)=\max_{u\in\mathcal C}\phi_{x,y}(u)$. The proof
proceeds in five steps.

\paragraph{Step 1: Well-posedness, continuity, and the residual map.}
Since $\nabla_ug(x,\cdot)$ is $L_g$-Lipschitz and $\lambda > L_g$, $\phi_{x,y}$ is $(\lambda-L_g)$-strongly
concave. The set $\mathcal C$ is closed and
convex because $c$ is continuous and convex, nonempty by the Slater
condition, and bounded because the strongly convex function $c$ is
coercive; thus $\mathcal C$ is compact. Consequently the maximizer
$u(x,y)$ of $\phi_{x,y}$ over $\mathcal C$ exists and is unique, and it
is characterized by the variational inequality
\begin{equation}
    \langle\nabla\phi_{x,y}(u(x,y)),\,v-u(x,y)\rangle\leq0
    \qquad\text{for all }v\in\mathcal C.
    \label{eq:inverseLip-VI}
\end{equation}
Fix two parameter pairs $(x,y)$ and $(x',y')$ and abbreviate
$u=u(x,y)$, $u'=u(x',y')$. Applying \eqref{eq:inverseLip-VI} at $(x,y)$
with $v=u'$ and at $(x',y')$ with $v=u$, and adding the two
inequalities, gives
$\langle\nabla\phi_{x,y}(u)-\nabla\phi_{x',y'}(u'),u-u'\rangle\geq0$.
Inserting $\pm\nabla\phi_{x,y}(u')$ and using strong concavity of
$\phi_{x,y}$ yields
\begin{equation*}
    (\lambda-L_g)\norm{u-u'}^2
    \leq
    -\langle\nabla\phi_{x,y}(u)-\nabla\phi_{x,y}(u'),u-u'\rangle
    \leq
    \langle\nabla\phi_{x,y}(u')-\nabla\phi_{x',y'}(u'),u-u'\rangle .
\end{equation*}
Since
$\nabla\phi_{x,y}(u')-\nabla\phi_{x',y'}(u')
=\nabla_ug(x,u')-\nabla_ug(x',u')+\lambda(y-y')$, applying the Cauchy--Schwarz inequality to the right-hand side, we conclude that
\begin{equation}
    \norm{u(x,y)-u(x',y')}
    \leq
    \frac{1}{\lambda-L_g}
    \Bigl(
        \norm{\nabla_ug(x,u(x',y'))-\nabla_ug(x',u(x',y'))}
        +\lambda\norm{y-y'}
    \Bigr).
    \label{eq:inverseLip-u-continuity}
\end{equation}
Since $\nabla_ug$ is continuous, \eqref{eq:inverseLip-u-continuity}
shows that $(x,y)\mapsto u(x,y)$ is continuous on $\R^d\times\R^m$, and
with $x=x'$ it gives the Lipschitz bound
\begin{equation}
    \norm{u(x,y)-u(x,y')}
    \leq
    \frac{\lambda}{\lambda-L_g}\norm{y-y'}.
    \label{eq:inverseLip-u-lipschitz}
\end{equation}
Since $\mathcal C$ is compact, $\phi_{x,y}$ has a unique maximizer, and
$\nabla_y\phi_{x,y}(u)=\lambda(u-y)$ is continuous, Danskin's theorem
shows that $F(x,\cdot)$ is continuously differentiable with
\begin{equation}\label{eq:def-Rx}
    R_x(y)
    :=
    \nabla_yF(x,y)
    =
    \lambda(u(x,y)-y).
\end{equation}
In particular $R:(x,y)\mapsto R_x(y)$ is continuous, and by
\eqref{eq:inverseLip-u-lipschitz},
\begin{equation*}
    \norm{R_x(y)-R_x(y')}
    \leq
    \lambda\bigl(\norm{u(x,y)-u(x,y')}+\norm{y-y'}\bigr)
    \leq
    L_F\norm{y-y'},
    \qquad
    L_F:=\frac{\lambda(2\lambda-L_g)}{\lambda-L_g}.
\end{equation*}
Hence $\nabla_y F(x,\cdot)$ is $L_F$-Lipschitz continuous. This is the
only property needed in the proof of Lemma~\ref{lem:gap-properties}(i):
for any $p>L$, the objective in \eqref{eq:w-def} is
$(p-L)$-strongly concave, so the regularized best response $w(x,y)$
is well defined and satisfies \eqref{eq:proximal-foc}.

\paragraph{Step 2: The KKT system and the multiplier.}
Because $\phi_{x,y}$ is concave, $c$ is convex, and the Slater condition
holds, the KKT conditions are necessary and sufficient for optimality:
$u\in\mathcal C$ maximizes $\phi_{x,y}$ over $\mathcal C$ if and only if
there exists $\alpha\geq0$ with
$\nabla\phi_{x,y}(u)=\alpha\nabla c(u)$ and $\alpha c(u)=0$. At
$u=u(x,y)$ we have
$\nabla\phi_{x,y}(u)=\nabla_ug(x,u)-\lambda(u-y)=\nabla_ug(x,u)-r$
with $r:=R_x(y)$, so the KKT conditions read
\begin{align}
    -\nabla_u g(x,u)+r+\alpha\nabla c(u)&=0,
    \label{eq:inverseLip-kkt-stationarity}\\
    c(u)&\leq0,
    \qquad
    \alpha\geq0,
    \qquad
    \alpha c(u)=0.
    \label{eq:inverseLip-kkt-complementarity}
\end{align}
Conversely, if $(u,\alpha)$ satisfies
\eqref{eq:inverseLip-kkt-stationarity}--\eqref{eq:inverseLip-kkt-complementarity}
for some vector $r$, then $(u,\alpha)$ is a KKT pair of
\eqref{eq:inverseLip-envelope} with center $y:=u-r/\lambda$, and
therefore $u=u(x,y)$ and $r=R_x(y)$ by \eqref{eq:def-Rx}. This
observation is the basis of the local inversion below.

We next show that $\nabla c(u)\neq0$ whenever $c(u)=0$. Indeed, if
$\nabla c(\bar u)=0$ at some $\bar u$ with $c(\bar u)=0$, then strong
convexity gives
$c(v)\geq c(\bar u)+\frac{\mu_c}{2}\norm{v-\bar u}^2\geq0$ for every
$v$, contradicting the Slater condition. Consequently the multiplier
is unique: $\alpha=0$ if $c(u)<0$, while if $c(u)=0$, taking the inner
product of \eqref{eq:inverseLip-kkt-stationarity} with $\nabla c(u)$
gives
\begin{equation}
    \alpha
    =
    \frac{\langle\nabla_ug(x,u)-r,\nabla c(u)\rangle}{\norm{\nabla c(u)}^2}.
    \label{eq:inverseLip-multiplier-formula}
\end{equation}
We write $\alpha(x,y)$ for this multiplier, so that
$(u(x,y),\alpha(x,y))$ is the unique KKT pair at $(x,y)$.

\paragraph{Step 3: Compactness of the small-residual set and stability of the active set.}
Define
\begin{equation*}
    \mathcal D_{\eta_M}
    :=
    \left\{
        (x,y)\in\K^+:
        \norm{R_x(y)}\leq\eta_M
    \right\}.
\end{equation*}
The set $\K$ is compact by Assumption~\ref{ass:compact-sublevel}, so
its closed unit enlargement $\K^+$ is compact, and
$\mathcal D_{\eta_M}$ is a closed subset of $\K^+$ because $R$ is
continuous. Hence $\mathcal D_{\eta_M}$ is compact.

Fix $\bar z:=(\bar x,\bar y)\in\mathcal D_{\eta_M}$ and write
$\bar u:=u(\bar x,\bar y)$, $\bar\alpha:=\alpha(\bar x,\bar y)$, and
$\bar r:=R_{\bar x}(\bar y)$. We claim that there is an open
neighborhood $N^0$ of $\bar z$ in $\R^{d}\times\R^m$ on which
the active set is constant and $\alpha$ is continuous.

If $c(\bar u)<0$, continuity of $u$ gives an open neighborhood $N^0$
on which $c(u(x,y))<0$, hence $\alpha\equiv0$ on $N^0$.

If $c(\bar u)=0$, then $\bar\alpha>0$ by strict complementarity~(iii),
because $(\bar x,\bar y)\in\K^+$ and $\norm{\bar r}\leq\eta_M$. Suppose,
for contradiction, that there were points $(x_k,y_k)\to(\bar x,\bar y)$
with $c(u_k)<0$, where $u_k:=u(x_k,y_k)$ and $r_k:=R_{x_k}(y_k)$. Then
$\alpha(x_k,y_k)=0$, and \eqref{eq:inverseLip-kkt-stationarity} reduces
to $\nabla_ug(x_k,u_k)=r_k$. By continuity of $u$, $R$, and
$\nabla_ug$, letting $k\to\infty$ gives
$\nabla_ug(\bar x,\bar u)=\bar r$. Comparing with
\eqref{eq:inverseLip-kkt-stationarity} at $(\bar x,\bar y)$ yields
$\bar\alpha\nabla c(\bar u)=0$, which is impossible since
$\bar\alpha>0$ and $\nabla c(\bar u)\neq0$. Hence there is an open
neighborhood $N^0$ of $(\bar x,\bar y)$ on which $c(u(x,y))=0$. On
$N^0$ the multiplier is given by
\eqref{eq:inverseLip-multiplier-formula} with $u=u(x,y)$ and
$r=R_x(y)$, and $\nabla c(u(x,y))\neq0$ there, so $\alpha$ is
continuous on $N^0$.

In both cases, the map $(x,y)\mapsto(u(x,y),\alpha(x,y))$ is
continuous on $N^0$.

\paragraph{Step 4: Local inversion of $R_x$ near $\bar z$.}
We keep $\bar z=(\bar x,\bar y)\in\mathcal D_{\eta_M}$ and the notation
of Step~3, and apply the implicit function theorem to the KKT system in
the variables $(x,r)$. Below, $B(a,\epsilon)$ denotes the open Euclidean
ball of radius $\epsilon$ centered at $a$.

\emph{Inactive case, $c(\bar u)<0$.}
Set $\Phi(x,r,u):=-\nabla_ug(x,u)+r$. Then $\Phi(\bar x,\bar r,\bar u)=0$
and $\partial_u\Phi(\bar x,\bar r,\bar u)=-\nabla^2_{uu}g(\bar x,\bar u)
=H(\bar x,\bar u,0)$, which is nonsingular by~(iv).

\emph{Active case, $c(\bar u)=0$.}
Set
$\Psi(x,r,u,\alpha)
:=\begin{pmatrix}
    -\nabla_ug(x,u)+r+\alpha\nabla c(u) \\ c(u)
\end{pmatrix}$.
Then $\Psi(\bar x,\bar r,\bar u,\bar\alpha)=0$, and the Jacobian of
$\Psi$ with respect to $(u,\alpha)$ at this point is
\begin{equation*}
    M
    =
    \begin{bmatrix}
        H(\bar x,\bar u,\bar\alpha)&\nabla c(\bar u)\\
        \nabla c(\bar u)^\top&0
    \end{bmatrix}.
\end{equation*}
To see that $M$ is nonsingular, let
$M\begin{bmatrix}d\\ \beta\end{bmatrix}=0$, i.e.,
\begin{equation*}
    H(\bar x,\bar u,\bar\alpha)d
    +\beta\nabla c(\bar u)=0,
    \qquad
    \nabla c(\bar u)^\top d=0.
\end{equation*}
The second equation places $d$ in the range of $Z(\bar u)$, so
$d=Z(\bar u)q$ for some $q$. Multiplying the first equation by
$Z(\bar u)^\top$ and using $Z(\bar u)^\top\nabla c(\bar u)=0$ gives
$Z(\bar u)^\top H(\bar x,\bar u,\bar\alpha)Z(\bar u)q=0$. The
reduced-Hessian assumption~(iv) yields $q=0$, hence $d=0$, and then
$\beta\nabla c(\bar u)=0$ with $\nabla c(\bar u)\neq0$ gives
$\beta=0$. Thus $M$ is nonsingular.

\emph{Implicit function chart.}
In both cases, the implicit function theorem provides
$\epsilon_{\bar z}>0$, an open neighborhood $W_{\bar z}$ of
$(\bar u,\bar\alpha)$, and a continuously differentiable map
\begin{equation*}
    (x,r)\mapsto
    \bigl(\hat u_{\bar z}(x,r),\hat\alpha_{\bar z}(x,r)\bigr)\in W_{\bar z}
    \qquad\text{on}\qquad
    V_{\bar z}:=B(\bar x,\epsilon_{\bar z})\times B(\bar r,\epsilon_{\bar z}),
\end{equation*}
with $(\hat u_{\bar z},\hat\alpha_{\bar z})(\bar x,\bar r)
=(\bar u,\bar\alpha)$, such that for every $(x,r)\in V_{\bar z}$ the
pair $(\hat u_{\bar z}(x,r),\hat\alpha_{\bar z}(x,r))$ is the unique
$(u,\alpha)\in W_{\bar z}$ satisfying
\eqref{eq:inverseLip-kkt-stationarity} together with the active-set
condition of $\bar z$, namely $\alpha=0$ in the inactive case and
$c(u)=0$ in the active case. In the active case this is the theorem
applied to $\Psi$. In the inactive case it is the theorem applied to
$\Phi$, which provides a neighborhood $W'$ of $\bar u$; we then set
$\hat\alpha_{\bar z}\equiv0$ and $W_{\bar z}:=W'\times\R$. Shrinking $\epsilon_{\bar z}$, we may assume that
the continuity of
$\partial_r\hat u_{\bar z}$ gives a constant $\kappa_{\bar z}$ with
$\norm{\partial_r\hat u_{\bar z}}\leq\kappa_{\bar z}$ on $V_{\bar z}$.
Each slice $\{x\}\times B(\bar r,\epsilon_{\bar z})$ of the product
$V_{\bar z}$ is convex, so integrating along the segment joining
$(x,r_2)$ and $(x,r_1)$ gives
\begin{equation}
    \norm{\hat u_{\bar z}(x,r_1)-\hat u_{\bar z}(x,r_2)}
    \leq
    \kappa_{\bar z}\norm{r_1-r_2}
    \qquad
    \text{whenever }(x,r_1),(x,r_2)\in V_{\bar z}.
    \label{eq:inverseLip-hatu-lipschitz}
\end{equation}

\emph{Right inverse.}
Define
\begin{equation}
    Y_{\bar z}(x,r)
    :=
    \hat u_{\bar z}(x,r)-\frac{r}{\lambda},
    \qquad (x,r)\in V_{\bar z}.
    \label{eq:inverseLip-local-center}
\end{equation}
Since $(\hat u_{\bar z}(x,r),\hat\alpha_{\bar z}(x,r))$ satisfies
\eqref{eq:inverseLip-kkt-stationarity}--\eqref{eq:inverseLip-kkt-complementarity}
at $(x,r)$, the converse statement in Step~2 gives
$\hat u_{\bar z}(x,r)=u(x,Y_{\bar z}(x,r))$ and
\begin{equation*}
    R_x(Y_{\bar z}(x,r))=r
    \qquad\text{for all }(x,r)\in V_{\bar z},
\end{equation*}
and \eqref{eq:inverseLip-hatu-lipschitz} yields
\begin{equation}
    \norm{Y_{\bar z}(x,r_1)-Y_{\bar z}(x,r_2)}
    \leq
    \Bigl(\kappa_{\bar z}+\frac1\lambda\Bigr)\norm{r_1-r_2}
    \qquad
    \text{whenever }(x,r_1),(x,r_2)\in V_{\bar z}.
    \label{eq:inverseLip-Y-lipschitz}
\end{equation}

\emph{Left inverse on a chart in $(x,y)$-space.}
The map $(x,y)\mapsto(x,R_x(y))$ is continuous and sends $\bar z$ to
$(\bar x,\bar r)\in V_{\bar z}$, and by Step~3 the map
$(x,y)\mapsto(u(x,y),\alpha(x,y))$ is continuous on $N^0$ and sends
$\bar z$ to $(\bar u,\bar\alpha)\in W_{\bar z}$. Hence there is an open
neighborhood $N_{\bar z}\subseteq N^0$ of $\bar z$ such that
\begin{equation*}
    (x,R_x(y))\in V_{\bar z}
    \qquad\text{and}\qquad
    (u(x,y),\alpha(x,y))\in W_{\bar z}
    \qquad
    \text{for every }(x,y)\in N_{\bar z}.
\end{equation*}
Let $(x,y)\in N_{\bar z}$ and $r:=R_x(y)$. By Step~2,
$(u(x,y),\alpha(x,y))$ satisfies
\eqref{eq:inverseLip-kkt-stationarity} at $(x,r)$, and by Step~3 it
satisfies the active-set condition of $\bar z$. The uniqueness property
of the chart gives $u(x,y)=\hat u_{\bar z}(x,r)$, and hence, by
\eqref{eq:def-Rx},
\begin{equation*}
    Y_{\bar z}(x,R_x(y))
    =
    u(x,y)-\frac{1}{\lambda}R_x(y)
    =
    u(x,y)-\bigl(u(x,y)-y\bigr)
    =
    y.
\end{equation*}
Thus $R_x$ and $Y_{\bar z}(x,\cdot)$ are mutually inverse between
$\{y:(x,y)\in N_{\bar z}\}$ and its image. Combining this with
\eqref{eq:inverseLip-Y-lipschitz}, and using that
$(x,R_x(y_i))\in V_{\bar z}$ for $(x,y_i)\in N_{\bar z}$, we obtain
\begin{equation}
    \norm{y_1-y_2}
    =
    \norm{Y_{\bar z}(x,R_x(y_1))-Y_{\bar z}(x,R_x(y_2))}
    \leq
    \Bigl(\kappa_{\bar z}+\frac1\lambda\Bigr)\norm{R_x(y_1)-R_x(y_2)}
    \quad
    \text{whenever }(x,y_1),(x,y_2)\in N_{\bar z}.
    \label{eq:inverseLip-chart-bound}
\end{equation}
The constant here depends on $\bar z$ but not on $x$.

\paragraph{Step 5: Uniform constants.}
The family $\{N_{\bar z}\}_{\bar z\in\mathcal D_{\eta_M}}$ is an open
cover of the compact set $\mathcal D_{\eta_M}$. Extract a finite subcover
$N_1,\dots,N_J$ with associated constants $\kappa_1,\dots,\kappa_J$, and
let $\ell>0$ be a Lebesgue number of the cover
$\{N_j\cap\mathcal D_{\eta_M}\}_{j=1}^J$ of the compact metric space
$\mathcal D_{\eta_M}$: every subset of $\mathcal D_{\eta_M}$ of diameter
less than $\ell$ is contained in a single $N_j$.

Set
\begin{equation}
    \eta_s:=\eta_M,
    \qquad
    \delta_s:=\min\Bigl\{1,\frac{\ell}{2},\frac{\eta_M}{p}\Bigr\},
    \qquad
    \kappa_s:=\max_{1\leq j\leq J}\Bigl(\kappa_j+\frac1\lambda\Bigr).
    \label{eq:inverseLip-final-constants}
\end{equation}

Let $(x,y)\in\K$ and $w=w(x,y)$ satisfy \eqref{eq:inverseLip-region}
with these constants. Then $\norm{w-y}\leq\delta_s\leq1$, so
$(x,w)\in\K^+$, while $(x,y)\in\K\subseteq\K^+$. Moreover,
$\norm{R_x(y)}\leq\eta_s=\eta_M$ by \eqref{eq:inverseLip-region}, and the
first-order condition \eqref{eq:proximal-foc} gives
$\norm{R_x(w)}=\norm{\nabla_yF(x,w)}=p\norm{w-y}\leq p\,\delta_s\leq\eta_M$.
Hence both $(x,y)$ and $(x,w)$ belong to $\mathcal D_{\eta_M}$. They share the same $x$, so
\begin{equation*}
    \norm{(x,y)-(x,w)}=\norm{w-y}\leq\delta_s\leq\frac{\ell}{2}<\ell,
\end{equation*}
and the Lebesgue property places both points in a common chart $N_j$.
Applying \eqref{eq:inverseLip-chart-bound} on $N_j$ yields
\begin{equation*}
    \norm{w-y}
    \leq
    \Bigl(\kappa_j+\frac1\lambda\Bigr)\norm{R_x(w)-R_x(y)}
    \leq
    \kappa_s\norm{\nabla_yF(x,w)-\nabla_yF(x,y)},
\end{equation*}
which is \eqref{eq:inverseLip-bound}. Therefore
Assumption~\ref{ass:inverseLip} holds with the constants
\eqref{eq:inverseLip-final-constants}.
\end{proof}

\begin{remark}[Where each hypothesis is used]
\label{rem:hypothesis-roles}
Hypothesis~(i) makes the inner problem strongly concave, which gives
uniqueness and continuity of $u(x,y)$, Danskin's formula
\eqref{eq:def-Rx}, and the sufficiency of the KKT conditions used in
Steps~2 and~4. Hypothesis~(ii) makes the feasible set compact and
convex, and gives $\nabla c(u)\neq0$ at active points, hence the
uniqueness and the explicit formula
\eqref{eq:inverseLip-multiplier-formula} of the multiplier in Step~2.
Hypothesis~(iii) rules out the degenerate case $\alpha=0$ with
$c(u)=0$, in which the KKT system is not locally a square smooth system
in $(u,\alpha)$ and the active set can change under arbitrarily small
perturbations of $r$; it is what keeps the active set locally constant
in Step~3. Hypothesis~(iv) is the nondegeneracy that makes the implicit
function theorem applicable in Step~4; it is the exact analogue, for the
constrained problem, of \eqref{eq:uniform-nondegeneracy} in
Proposition~\ref{prop:hessian-nondegeneracy}.
\end{remark}

\subsection{Proofs of stationarity transfer theorem}
\label{app:transfer}

\begin{proof}
Since $\rho\geq\rho_c$ and $\G\geq0$,
$    P_{\rho_c}(x,y)
    \leq P_\rho(x,y)
    \leq C_0$.
Hence $(x,y)\in\K$. Also,
\begin{align*}
    \rho\G(x,y)
    =P_\rho(x,y)-F(x,y)
    \nonumber \leq C_0-\underline F_{\K}
    =B.
\end{align*}
Combining above with
\eqref{eq:gap-displacement-lower} yields the coarse localization bound
\begin{displaymath}
    \norm{w-y}
    \leq \sqrt{\frac{2\G(x,y)}{p-L}} \leq
    \frac{D}{\sqrt{\rho}}.
\end{displaymath}
By the first-order condition \eqref{eq:proximal-foc} and
$L$-smoothness give
\begin{align*}
    \norm{\nabla_yF(x,y)}
    &\leq
    \norm{\nabla_yF(x,y)-\nabla_yF(x,w)}
    +\norm{\nabla_yF(x,w)}
    \nonumber\\
    &\leq
    (L+p)\norm{w-y}
    \leq
    \frac{C_y}{\sqrt{\rho}}.
\end{align*}
ALso, $\norm{\nabla_yF(x,w)} =p\norm{w-y} \leq \frac{pD}{\sqrt{\rho}}$. Since $\rho\geq \max\{C_y^2/\eta_s^2, p^2 D^2/\eta_s^2, D^2/\delta_s^2\}$, the displayed bounds give $\norm{\nabla_yF(x,y)}\leq\eta_s$, $\norm{\nabla_yF(x,w)}\leq\eta_s$, and $\norm{w-y}\leq\delta_s$, so \eqref{eq:inverseLip-region} holds. Hence Assumption~\ref{ass:inverseLip} gives
\begin{equation}
    \norm{w-y}\leq\kappa_s\norm{\Delta_y}.
    \label{eq:inverseLip-applied}
\end{equation}

By \eqref{eq:proximal-foc}, $\nabla_yF(x,y)=p(w-y)-\Delta_y$.
Together with \eqref{eq:inverseLip-applied}, this yields
\begin{equation}
    \norm{\nabla_yF(x,y)}
    \leq
    (1+p\kappa_s)\norm{\Delta_y}.
    \label{eq:inverseLip-y-via-delta}
\end{equation}

Applying \eqref{eq:inverseLip-low-level}, we obtain
\begin{align*}
    \varepsilon \geq
    \norm{\nabla_yP_\rho(x,y)}=
    \norm{\nabla_yF(x,y)+\rho\Delta_y} \geq
    \rho\norm{\Delta_y}
    -\norm{\nabla_yF(x,y)} \geq
    (\rho-1-p\kappa_s)\norm{\Delta_y},
\end{align*}
where the equality follows from \eqref{eq:penalty-gradient-y}, the second inequality is by the triangle inequality, and the last inequality is by \eqref{eq:inverseLip-y-via-delta}. Since $\rho\geq2(1+p\kappa_s)$, the denominator is positive and proves the first bound in \eqref{eq:inverseLip-delta-displacement}.  The second bound in
\eqref{eq:inverseLip-delta-displacement} and the first bound in
\eqref{eq:inverseLip-nabla} follow from
\eqref{eq:inverseLip-applied} and
\eqref{eq:inverseLip-y-via-delta}, respectively.

Finally, using \eqref{eq:penalty-gradient-x} first, applying the Lipschitz continuity of $\nabla_xF$ and \eqref{eq:inverseLip-low-level} in the second inequality, and applying second bound of \eqref{eq:inverseLip-delta-displacement} in the third, we have
\begin{align*}
    \norm{\nabla_xF(x,y)}
    &\leq
    \norm{\nabla_xP_\rho(x,y)}
    +\rho\norm{\nabla_xF(x,w)-\nabla_xF(x,y)} \\
    &\leq
    \varepsilon+\rho L\norm{w-y} \leq
    \left(
        1+
        \frac{\rho L\kappa_s}{\rho-1-p\kappa_s}
    \right)\varepsilon.
\end{align*}
This proves the second bound in \eqref{eq:inverseLip-nabla}. Combined with the first, this proves \eqref{eq:inverseLip-game}. Setting $\varepsilon=0$ proves the
last assertion.
\end{proof}

\subsection{Proofs of the complexity theorems}
\label{app:proof-complexity}

\subsubsection{Proof of Theorem~\ref{thm:gd-complexity}}
\label{app:proof-gd}
\begin{proof}
By Lemma~\ref{lem:gap-properties}, $P_\rho$ is $L_\rho$-smooth.
The standard descent lemma applied to
\eqref{eq:penalty-gradient-descent} gives
\eqref{eq:penalty-descent}. Since $P_\rho(z^0)\leq C_0$ by the construction of initialization, \eqref{eq:penalty-descent} gives
\begin{equation*}
    P_\rho(z^k)\leq C_0
    \qquad\text{for all }k.
\end{equation*}
Since $\rho\geq\rho_c$ and $\G\geq0$,
\begin{equation*}
    P_{\rho_c}(z^k)
    \leq P_\rho(z^k)
    \leq C_0 \implies z^k\in\K \qquad\text{for all }k.
\end{equation*}

Summing \eqref{eq:penalty-descent} from $k=0$ to $N-1$ yields
\begin{equation*}
    \frac{1}{2L_\rho}
    \sum_{k=0}^{N-1}
    \norm{\nabla P_\rho(z^k)}^2
    \leq
    P_\rho(z^0)-P_\rho(z^N).
\end{equation*}
Because $P_\rho\geq P_{\rho_c}$, $P_\rho(z^N)\geq P_c^*$. Therefore,
$
    \sum_{k=0}^{N-1}
    \norm{\nabla P_\rho(z^k)}^2
    \leq
    2L_\rho\Delta_c$,
which proves \eqref{eq:penalty-complexity-bound}.

Under \eqref{eq:iteration-complexity}, some iterate satisfies
$\norm{\nabla P_\rho(z^k)} \leq \epsilon/C_\rho$.
This iterate also satisfies $P_\rho(z^k)\leq C_0$, so
Theorem~\ref{thm:stationarity-transfer} gives
\eqref{eq:epsilon-game-stationarity}.
\end{proof}

\subsubsection{Proof of Theorem~\ref{thm:inexact-gd-complexity}}
\label{app:proof-inexact}
\begin{proof}
Since $P_\rho$ is $L_\rho$-smooth, the update
\eqref{eq:inexact-penalty-gradient-descent} gives
\begin{align*}
P_\rho(z^{k+1})
&\leq
P_\rho(z^k)
-\frac{1}{L_\rho}
\langle \nabla P_\rho(z^k),\widetilde g^k\rangle
+\frac{1}{2L_\rho}
\|\widetilde g^k\|^2 \\
&=
P_\rho(z^k)
-\frac{1}{2L_\rho}\|\nabla P_\rho(z^k)\|^2
+\frac{1}{2L_\rho}\|\widetilde g^k-\nabla P_\rho(z^k)\|^2 \\
&\leq
P_\rho(z^k)
-\frac{1}{2L_\rho}\|\nabla P_\rho(z^k)\|^2
+\frac{\rho^2L^2\delta^2}{2L_\rho},
\end{align*}
which proves \eqref{eq:inexact-penalty-descent}.

For the second assertion, observe first that
\eqref{eq:inexact-gradient-error} and
\eqref{eq:inexact-penalty-descent} imply the conditional monotonicity
\begin{equation}
\|\nabla P_\rho(z^j)\|\geq\rho L\delta
\quad\Longrightarrow\quad
P_\rho(z^{j+1})\leq P_\rho(z^j).
\label{eq:conditional-monotonicity}
\end{equation}

Consider the case in which there exists an index $k<N$ such that
\begin{equation}
\|\nabla P_\rho(z^k)\|<\rho L \delta,
\label{eq:first-small-gradient}
\end{equation}
and let $k$ be the first such index. For every $j<k$,
$\|\nabla P_\rho(z^j)\|\geq\rho L \delta$, so
\eqref{eq:conditional-monotonicity} gives
$P_\rho(z^k)\leq P_\rho(z^0)\leq C_0$, and $z^k\in\K$ follows from
$P_{\rho_c}\leq P_\rho$. The bound
\eqref{eq:inexact-penalty-complexity} holds because
$\|\nabla P_\rho(z^k)\|^2<\rho^2L^2\delta^2$.

Otherwise, $\|\nabla P_\rho(z^k)\|\geq\rho L\delta$ for every
$k<N$, and \eqref{eq:conditional-monotonicity} gives
$P_\rho(z^k)\leq P_\rho(z^0)\leq C_0$, and hence $z^k\in\K$, for all
$k\leq N$. Summing
\eqref{eq:inexact-penalty-descent} from $k=0$ to $N-1$ gives
\begin{equation*}
\frac{1}{2L_\rho}
\sum_{k=0}^{N-1}\|\nabla P_\rho(z^k)\|^2
\leq
P_\rho(z^0)-P_\rho(z^N)
+\frac{N\rho^2L^2\delta^2}{2L_\rho}.
\end{equation*}

Since $z^N\in\K$ and $P_{\rho_c}\leq P_\rho$, we have
$P_\rho(z^N)\geq P_{\rho_c}(z^N)\geq P_c^*$, whence
\begin{equation*}
\sum_{j=0}^{N-1}\|\nabla P_\rho(z^j)\|^2
\leq
2L_\rho\Delta_c+N\rho^2L^2\delta^2 .
\end{equation*}
Dividing by $N$, some iterate satisfies
\eqref{eq:inexact-penalty-complexity}. In both cases the selected iterate satisfies $P_\rho(z^k)\leq C_0$ and
\eqref{eq:inexact-penalty-complexity}, so applying Theorem~\ref{thm:stationarity-transfer} gives
\eqref{eq:inexact-game-stationarity}.

Finally, under \eqref{eq:iteration-complexity} and $\delta\leq\epsilon$, some iterate satisfies
\begin{equation}
   \|\nabla_xF(x^k,y^k)\|
    +
    \|\nabla_yF(x^k,y^k)\|
    \leq
    \sqrt{1+C_\rho^2 \rho^2L^2} \epsilon
    \leq (1+C_\rho \rho L)\epsilon.
\end{equation}
\end{proof}


\section{Robust regression example illustrating Assumption~\ref{ass:inverseLip}}
\label{ex:robust-regression}
Consider the one-sample response-adversarial regression problem
\begin{equation}
\min_{x\in\R}\max_{|u|\leq 5}g(x,u),
\qquad
g(x,u):=\frac12(x-u)^2+r(x),
\qquad
r(x):=10x^2+23(\cos x-1).
\end{equation}
The regularizer is smooth and coercive but nonconvex, since
\(r''(x)=20-23\cos x\) and \(r''(0)=-3\). With \(c(u):=u^2-25\) and
\(\lambda=2\), the upper Moreau envelope is
\begin{equation}
\label{eq:regression-envelope}
F(x,y)
:=
\max_{c(u)\leq 0}
\Bigl\{g(x,u)-\frac{\lambda}{2}(u-y)^2\Bigr\}
=
\max_{u^2-25\leq 0}
\Bigl\{\frac12(x-u)^2+r(x)-(u-y)^2\Bigr\}.
\end{equation}
Since \(\nabla_ug(x,u)=u-x\), we have \(L_g=1<\lambda\), so the inner
objective is \(1\)-strongly concave in \(u\) with unique maximizer
\(u(x,y)=\Pi_{[-5,5]}(2y-x)\). Danskin's theorem gives
\(\nabla_xF(x,y)=x-u(x,y)+r'(x)\) and
\begin{equation}
\label{eq:regression-residual}
\nabla_yF(x,y)
=
2\bigl(u(x,y)-y\bigr)
=
\begin{cases}
-2(y+5), & 2y-x\leq-5,\\
2(y-x),  & |2y-x|<5,\\
-2(y-5), & 2y-x\geq5.
\end{cases}
\end{equation}
Because \(u\) is Lipschitz and \(r'(x)=20x-23\sin x\) is
\(43\)-Lipschitz, \(\nabla F\) is globally Lipschitz; moreover, \(\nabla_yF(x,y)\) is
continuous and piecewise affine with slopes \(\pm2\), so
Assumption~\ref{ass:smoothness} holds with \(L_y=2\). We take \(p=4\).

\begin{proposition}
\label{prop:robust-regression-verification}
For the example above with \(p=4\), the following hold.
\begin{enumerate}[label=(\roman*)]
    \item \(F\) is nonconvex in \(x\) and nonconcave in \(y\).
    \item Assumption~\ref{ass:compact-sublevel} holds with \(\rho_c=7\),
    \(C_0= 15\), and \(z_\rho^0=(0,0)\) for every \(\rho\geq7\).
    \item Assumption~\ref{ass:inverseLip} holds.
    \item No global max-player PL inequality, global-gap KL inequality, or
    global error bound to the set of maximizers holds.
\end{enumerate}
Consequently, this example lies within the scope of
Theorems~\ref{thm:stationarity-transfer} and~\ref{thm:gd-complexity}.
\end{proposition}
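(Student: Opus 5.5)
I would verify the four items directly from the explicit formulas $u(x,y)=\Pi_{[-5,5]}(2y-x)$ and the piecewise form \eqref{eq:regression-residual}. It helps first to write $F$ in closed form on the three regions. In the interior region $|2y-x|<5$, substituting $u=2y-x$ gives
\[
F(x,y)=\tfrac12(2x-2y)^2+r(x)-(y-x)^2=(x-y)^2+r(x).
\]
On the boundary regions, $u=\pm5$ gives $F(x,y)=\tfrac12(x\mp5)^2+r(x)-(y\mp5)^2$.

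For (i), I would note that $F(\cdot,0)$ near $x=0$ lies in the interior region, where $\partial_{xx}F=2+r''(0)=-1<0$, so $F$ is not convex in $x$. In the interior region $\partial_{yy}F=2>0$, so $F$ is not concave in $y$.

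For (ii), take $\rho\ge7$ with $p=4$ and $L_y=2$, as allowed by Remark~\ref{rem:p-choice}. By \eqref{eq:gap-gradient-bounds}, $P_\rho\ge F+\frac{\rho}{2(p+L_y)}\|\nabla_yF\|^2=F+\frac{\rho}{12}\|\nabla_yF\|^2$. The plan is to show that this lower bound is coercive, which gives boundedness of the sublevel set $\{P_\rho\le 15\}$ uniformly in $\rho\ge7$; closedness follows from continuity of $P_\rho$. The growth argument is organized by region:
\begin{itemize}
\item In the interior region, $F\ge r(x)\ge 10x^2-46$, and $\|\nabla_yF\|=2|y-x|$, so both $x$ and $y$ are controlled.
\item In the boundary regions, $-(y\mp5)^2$ appears, but $\frac{\rho}{12}\cdot4(y\mp5)^2\ge\frac{7}{3}(y\mp5)^2$ dominates it. What remains is $\tfrac43(y\mp5)^2+10x^2+\dots$, which is coercive.
\end{itemize}
This is why $\rho_c=7$ works; the inequality $\rho/3>1$ is the key check. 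I would also verify the initialization. Since $\max_yF(0,y)=25/2$, I take $C_0=15\ge 25/2+1$. At $(0,0)$ the point is interior, $\nabla_yF(0,0)=0$, and $F(0,0)=0$, so $P_\rho(0,0)=0\le C_0$.

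For (iii), I would check Assumption~\ref{ass:inverseLip} directly rather than through Proposition~\ref{prop:constrained-envelope-inverseLip}, because the map is piecewise affine in $y$.
\begin{itemize}
\item Whenever $y$ and $w$ lie in the same region, $\nabla_yF(x,w)-\nabla_yF(x,y)=\pm2(w-y)$, so the bound holds with $\kappa_s=1/2$.
\item The remaining case is when $y$ and $w$ straddle a kink at $2y-x=\pm5$. Near such a kink, the residual is $2(y-x)$ on one side, which is about $\pm(5-x)$ in magnitude. For this to be small we need $x\approx\pm5$, while the other side gives $-2(y\mp5)$ with $y\approx\pm5$. On the compact set $\K$ with $|x|$ bounded and small residuals, I would show that both residuals being at most $\eta_s$ forces the point to be near $(x,y)=(\pm5,\pm5)$.
\item At those points I would check whether $(\pm5,\pm5)$ lies in $\K$. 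If it does, then at the kink the two slopes are $+2$ and $-2$, and the secant difference could vanish. In that case I would shrink $\eta_s$ and $\delta_s$ so that straddling is impossible. This uses the fact that $\K$ is bounded in $x$ by coercivity of $r$, and the bound $10x^2-46$ together with the value cap keeps $|x|$ small, roughly $|x|\le 3$. Then $|y-x|\ge(5-|x|)/2>0$ near the kink, so the interior residual is bounded away from zero and straddling is excluded.
\end{itemize}
This kink analysis is the main obstacle, and the region bookkeeping has to be done carefully.

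For (iv), I would use the stationary point $(0,0)$. There $\nabla_yF=0$ but $F(0,0)=0<25/2=\max_yF(0,y)$, so any global PL or KL gap inequality evaluated at $(0,0)$ fails. The same point also violates any error bound to the set of maximizers, since $(0,0)$ has zero residual but is not a maximizer.
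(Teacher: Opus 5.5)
Your proof is correct. Items (i) and (iv) coincide with the paper's argument. Item (ii) differs only in bookkeeping. The paper uses the global bounds $F(x,y)\geq\frac{21}{2}x^2-y^2-46$ (take $u=0$) and $|\nabla_yF(x,y)|^2\geq2y^2-100$ (from $|u|\leq5$) to get $P_7\geq\frac{21}{2}x^2+\frac16y^2-\frac{313}{3}$. Your three-region estimate gives the slightly sharper $P_\rho\geq10x^2-46$ for $\rho\geq3$, hence $|x|\leq\sqrt{6.1}$ on $\K=\{P_7\leq15\}$.

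The genuine difference is (iii). The paper never examines the kinks. It verifies the hypotheses of Proposition~\ref{prop:constrained-envelope-inverseLip}: strict complementarity comes from the explicit multiplier at $u=\pm5$, using $|x|\leq\bar x<5$ on $\K^+$, and reduced-Hessian nondegeneracy is vacuous in one dimension. This presents the example as an instance of the general KKT-based sufficient condition. However, its constants come from an implicit-function and Lebesgue-number argument and are not explicit. In fact the chart bound $\kappa_j+1/\lambda$ is at least $3/2$ here, since $\partial_r\hat u=1$ on inactive charts. Your direct piecewise-affine argument is more elementary and needs $|x|<5$ only on $\K$ rather than $\K^+$. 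It also actually delivers the value $\kappa_s=1/2$ quoted in the main text.

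One step should be cleaned up. The fallback ``if $(\pm5,\pm5)\in\K$, shrink $\eta_s,\delta_s$'' would not work. For $x$ slightly below $5$ there are points $y<\frac{x+5}{2}<w(x,y)$ with $\nabla_yF(x,w)=\nabla_yF(x,y)$ and common residual tending to $0$ as $x\uparrow5$. Concretely, take $y=2+0.6x$, so that $w=3+0.4x$ and the residual is $4-0.8x$.

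The fallback is also unnecessary. Note that $\nabla_yF(x,\cdot)$ is $2$-Lipschitz and takes the values $5-x$ and $-(5+x)$ at the two kinks. So any choice with $\eta_s+2\delta_s<5-\sqrt{6.1}$ excludes a kink between $y$ and $w$ for $(x,y)\in\K$. The same-piece identity $\nabla_yF(x,w)-\nabla_yF(x,y)=\pm2(w-y)$ then gives $\kappa_s=1/2$.
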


\begin{proof}
We verify (i)--(iv) in turn.

\begin{enumerate}[label=(\roman*)]
    \item On the region \(|2y-x|<5\), we have \(u(x,y)=2y-x\), and
\eqref{eq:regression-envelope} becomes \(F(x,y)=r(x)+(x-y)^2\). Hence
\(\nabla^2_{xx}F(0,0)=r''(0)+2=-1<0\) and \(\nabla^2_{yy}F(0,0)=2>0\). In
particular, the max-player stationary point \(y=0\) is a strict local
minimum of \(F(0,\cdot)\).

\item
The first bound below follows from taking \(u=0\) in
\eqref{eq:regression-envelope} and \(23(\cos x-1)\geq-46\); the second
from \(|u(x,y)|\leq5\) and \(y^2\leq2(u-y)^2+2u^2\):
\begin{equation*}
F(x,y)\geq\frac{21}{2}x^2-y^2-46,
\qquad
|\nabla_yF(x,y)|^2=4\bigl(u(x,y)-y\bigr)^2\geq2y^2-100.
\end{equation*}

By Lemma~\ref{lem:gap-properties}, \(\G(x,y)\geq|\nabla_yF(x,y)|^2/(2(p+L_y))\). Combining these bounds gives
\begin{align*}
    P_7(x,y)
    &\geq
    F(x,y) + \frac{7}{12}|\nabla_yF(x,y)|^2 \\
    & \geq \frac{21}{2}x^2 + \frac{1}{6}y^2 - \frac{313}{3}.
\end{align*}
Hence, 
\begin{equation*}\label{eq:potential-lower-bound-example}
\K = \{P_7(x,y) \leq  15\} \subseteq \{\frac{21}{2}x^2 + \frac{1}{6}y^2 - \frac{313}{3} \le 15\}  
\end{equation*} 
is compact. At the origin,
\(u(0,0)=0\), \(F(0,0)=0\), and \(\nabla_yF(0,0)=0\), so
\(\G(0,0)=0\) by \eqref{eq:gap-zero-equivalence} and
\(P_\rho(0,0)=0\le 15 = C_0\) for every \(\rho\). Finally,
\eqref{eq:potential-lower-bound-example} gives
\(|x|\leq\sqrt{(313/3+15)\times 2/21}\approx3.371\) on \(\K\); hence, on the unit
enlargement \(\K^+\) of
Proposition~\ref{prop:constrained-envelope-inverseLip},
\begin{equation}
\label{eq:x-bound-enlarged-set}
|x|
\leq
\bar x
:=
1+\sqrt{(313/3+15)\times 2/21}
\approx4.371<5.
\end{equation}

\item We check the hypotheses of Proposition~\ref{prop:constrained-envelope-inverseLip}. Hypothesis (i) holds with $L_g = 1 < \lambda = 2$; (ii) holds with Slater point $u=0$. Let $\bar{x} < 5$ be the bound from \eqref{eq:x-bound-enlarged-set}, and choose $\eta_M =(5-\bar{x})/2>0$. Fix $(x,y) \in \K^+$ satisfying $|R(x)| \le \eta_M$ where $R(x) = \nabla_y F(x,y) = 2(u(x,y)-y)$ and abbreviate $u :=u(x,y)$. The KKT stationarity condition is 
$$-(u-x) +R + 2\alpha u = 0.$$
If $|u|<5$, then $\alpha=0$ and $H(x,u,0) = -\nabla^2_{uu}g(x,u) = -1$ is nonsingular. If $u=5$, then $\alpha = (5-x-R)/10 \ge (5-\bar{x}-\eta_M)/10 = (5-\bar{x})/20>0$. Similarly, if $u=-5$ then $\alpha=(5+x+R)/20 \ge (5-\bar{x}-\eta_M)/10 = (5-\bar{x})/20>0$. Thus, strict complementarity holds at every active KKT point in the required small-residual region, establishing hypothesis (iii). At an active point, $u=\pm5$ and $c'(u) = 2u \neq 0$, so the tangent space is $\{0\}$ and the reduced-Hessian nondegeneracy condition is vacuous. Equivalently, the corresponding KKT matrix has determinant $-4u^2=-100 \neq 0$. Together with the inactive case above, this verifies hypothesis (iv). Proposition~\ref{prop:constrained-envelope-inverseLip} therefore implies Assumption~\ref{ass:inverseLip} holds.

\item The proximal term in \eqref{eq:regression-envelope} is nonpositive and
vanishes at \(y=u\), so \(F^\star(x):=\max_yF(x,y)=\max_{|u|\leq5}g(x,u)\).
At \(x=0\), the value \(F^\star(0)=\frac{25}{2}\) is attained exactly on
\(Y^\star(0)=\{-5,5\}\), whereas \(F(0,0)=0\) and \(\nabla_yF(0,0)=0\).
Hence, for all \(\mu,C,\theta,\kappa>0\), each of
\begin{align}
\frac12|\nabla_yF(x,y)|^2
&\geq
\mu\bigl(F^\star(x)-F(x,y)\bigr),
\nonumber\\
C\bigl(F^\star(x)-F(x,y)\bigr)^\theta
&\leq
|\nabla_yF(x,y)|,
\label{eq:global-gap-KL}\\
\operatorname{dist}\bigl(y,Y^\star(x)\bigr)
&\leq
\kappa|\nabla_yF(x,y)|
\nonumber
\end{align}
fails at \((x,y)=(0,0)\), since the gradient vanishes there while
\(F^\star(0)-F(0,0)=\frac{25}{2}\) and
\(\operatorname{dist}(0,Y^\star(0))=5\). The same obstruction is already
present in the original objective: \(\nabla_ug(0,0)=0\), while
\(\max_{|u|\leq5}g(0,u)-g(0,0)=\frac{25}{2}\).
\end{enumerate}
\end{proof}

\end{document}